\numberwithin{equation}{section}
\theoremstyle{plain}
\newtheorem{thm}{Theorem}[section]
\newtheorem{prop}[thm]{Proposition}
\newtheorem{lemma}[thm]{Lemma}
\newtheorem{remark}[thm]{Remark}
\begin{document}
\title[Landau equation]
{Regularity of the spatially homogenous fractional Kramers-Fokker-Planck equation
}

\author[C.-J. Xu \& Y. Xu]
{Chao-Jiang Xu and Yan Xu}

\address{Chao-Jiang Xu and Yan Xu
\newline\indent
School of Mathematics and Key Laboratory of Mathematical MIIT,
\newline\indent
Nanjing University of Aeronautics and Astronautics, Nanjing 210016, China
}
\email{xuchaojiang@nuaa.edu.cn; xuyan1@nuaa.edu.cn}

\date{\today}

\subjclass[2010]{35B65,76P05,82C40}

\keywords{Kramers-Fokker-Planck equation, Gevrey regularity, Gelfand-Shilov space}

\begin{abstract}
We study the Cauchy problem of the spatially homogenous fractional Kramers-Fokker-Planck equation and show that the solution enjoys Gevrey regularity and decays estimation with an $L^2$ initial datum for positive time.
\end{abstract}

\maketitle

\section{Introduction}
\par The Cauchy problem of fractional Kramers-Fokker-Planck equation reads
\begin{equation*}
\left\{
\begin{aligned}
 &\partial_t u+v\cdot\nabla_x u+\langle v\rangle^\gamma\left((1-\Delta_{v})^s+\langle v\rangle^{2s}\right)u=f(t, x, v),\\
 &u|_{t=0}=u_0,
\end{aligned}
\right.
\end{equation*}
where $u=u(t, x, v)\ge0$ is the density distribution function of particles at time $t\ge0$, and $x, v\in\mathbb R^3$ represent the position and velocity variables of particles, respectively, with $\gamma>-3$, $0<s\le1$.

Here, $s$ and $\gamma$ are parameters that lead to the classical Kramers-Fokker-Planck equation if $\gamma=0, s=1$ and it is the leading term to linear Landau operator of Maxwellian case; if $s=1$, it is the leading term to linear Landau operator of the soft and hard potential case; if $0<s<1$, it is the leading term to linear Boltzmann operator of the soft and hard potential case.

The Kramers equation as a special Fokker-Planck equation was initially derived by Kramers~\cite{K-1} to describe the kinetics of a chemical reaction. Later, it turned out that it had more general applicability, e.g., to different fields such as supersonic conductors, Josephson tunneling junction, relaxation of dipoles, and second-order phase-locked loops. Kolmogorov~\cite{K-2} first studied this equation and gave an explicit formula for the fundamental solution, which implies the existence and smoothness of the solution. It was the main example for motivation to the general theory of H$\rm\ddot{o}$rmander~\cite{H-1} of hypoelliptic equations. The study of hypoelliptic equations often falls back on pseudodifferential operators, which has been addressed in H$\rm\ddot{o}$rmander’s classical results~\cite{H-1, H-2}. Mathematical analysis of the Kramers-Fokker-Planck equation is initially motivated by a trend to equilibrium for confining potentials~\cite{D-1, H-3, V-1}.

For the Kramers-Fokker-Planck equation, Desvillettes and Villani established the explicit decay of any polynomial order $t^{-1}$ in~\cite{D-1}. Exponential decay was shown in~\cite{T-1} and an explicit rate was given in~\cite{H-5}. Spectral properties for some different Fokker-Planck equations have been discussed by Kolokoltsov~\cite{K-4}. The semiclassical resolvent estimates for the Kramers-Fokker-Planck operator have been studied in~\cite{H-6}. The phase space reduction of the one-dimensional Kramers-Fokker-Planck equation was demonstrated in~\cite{K-3}. The Kramers-Fokker-Planck equation with potential has been studied in~\cite{W-1}. An optimal global-in-time $L^{p}-L^{q}$ estimate for solutions to the Kramers-Fokker-Planck equation with short-range potential has been given in~\cite{W-2}.

In this work, we consider the Cauchy problem of the spatially homogenous fractional Kramers-Fokker-Planck equation
\begin{equation}\label{1-2}
\left\{
\begin{aligned}
 &\partial_t u+\langle v\rangle^\gamma\left((1-\Delta_{v})^s+\langle v\rangle^{2s}\right)u=f(t, v),\\
 &u|_{t=0}=u_0,
\end{aligned}
\right.
\end{equation}
with $0<s<1$ and $\gamma+2s>0$. Moreover, the operator $(1-\Delta)^s$, where for any $u\in\mathcal S(\mathbb R^3)$, $(1-\Delta)^s: \mathcal S(\mathbb R^3)\to \mathcal S(\mathbb R^3)$ is defined via the Fourier transform
\begin{equation}\label{fractional Laplace definition}
    \begin{split}
        (1-\Delta)^su=\mathcal F^{-1}\left((1+|\xi|^{2})^{s}\mathcal Fu\right), \quad\xi\in\mathbb R^{3},
    \end{split}
\end{equation}
where $\mathcal F$ and $\mathcal F^{-1}$ are Fourier transform and its inverse, respectively.

Recalling the definition of classes of symbols $S^{m}_{1, 0}$, a $C^{\infty}$ function $a(x, \xi)$ defined in $\mathbb R^{3}\times\mathbb R^{3}$ belongs to the class $S^{m}_{1, 0}$, if there exists a constant $C_{\alpha, \beta}>0$, independent of $x\in\mathbb R^{3}$ and $\xi\in\mathbb R^{3}$, such that
$$\left|\partial^{\alpha}_{x}\partial^{\beta}_{\xi}a(x, \xi)\right|\le C_{\alpha, \beta}(1+|\xi|)^{m-|\beta|}, \quad \forall \alpha, \beta\in\mathbb N^{3},$$
for all $(x, \xi)\in\mathbb R^{3}\times\mathbb R^{3}$. And the class $S^{m}_{1, 0}$ is a Fr${\rm \acute e}$chet space with the semi-norm
$$|a|_{l}^{m}=\max_{|\alpha+\beta|=l}\sup_{x, \xi}(1+|\xi|)^{|\beta|-m}\left|\partial^{\alpha}_{x}\partial^{\beta}_{\xi}a(x, \xi)\right|.$$
So that the operator $(1-\Delta)^s$ can be viewed as a pseudo-differential operator of symbol $(1+|\xi|^{2})^{s}$.

We say that a $C^{\infty}$ function $u\in\mathcal G^{\sigma}(\mathbb R^{3})$, the Gevrey spaces, where $\sigma>0$, if there exists a constant $c_{0}>0$ such that 
\begin{equation*}
    e^{c_{0}|\xi|^{\frac1\sigma}}\hat u\in L^{2}(\mathbb R^{3}),
\end{equation*}
equivalently, for any multi-indices $\alpha\in\mathbb N^{3}$, there exists a constant $C>0$ such that
$$\|\partial^{\alpha}u\|_{L^{2}(\mathbb R^{3})}\le C^{|\alpha|+1}(\alpha!)^{\sigma},$$
the constant $c_{0}=\frac1C$ is the Gevrey radius. Then we introduced the Gelfand-Shilov space $S^{\mu}_{\nu}(\mathbb{R}^3)$. Let $\mu, \nu>0$ and $\mu+\nu\ge 1$, the $C^{\infty}$ function $u\in S^{\mu}_{\nu}(\mathbb{R}^3)$, if there exist $c_{0}, \tilde c_{0}>0$ such that
$$e^{c_{0}|\xi|^{\frac1\mu}}\hat u\in L^{2}(\mathbb R^{3}) \qquad {\rm and}\qquad e^{\tilde c_{0}|v|^{\frac1\nu}}u\in L^{2}(\mathbb R^{3}).$$

We study the Cauchy problem of \eqref{1-2}, with $0<s<1$, $\gamma+2s>0$, and show that the smooth solution to the Cauchy problem \eqref{1-2} with the $L^2(\mathbb R^3)$ initial datum enjoys the Gelfand-Shilov regularity. The main results read as follows.

\begin{thm}\label{thm1}
    For $0<s<1$, $\gamma+2s>0$, for any $T>0$ and $u_0\in L^2(\mathbb R^3)$. Let $u$ be the smooth solution of the Cauchy problem \eqref{1-2} and $f$ satisfies
    \begin{equation}\label{f}
        \left\|\langle D\rangle^{k}f\right\|_{L^2(\mathbb R^3)}\le A^{k+1}k!,\quad t\in]0,T], \ \forall k\in\mathbb N,
    \end{equation}
    then there exists a constant $C>0$ such that for any $k\in\mathbb N$, we have
    \begin{equation}\label{1-4}
        \left\|\langle D\rangle^{2\tilde s k} u(t)\right\|_{L^2(\mathbb R^3)}\le \frac{C^{k+1}}{t^{k}}k!,\quad\forall t\in]0,T],
    \end{equation}
    %and
    %\begin{equation}\label{1-4}
       % \|\langle\cdot\rangle^{\tilde\gamma k}u(t)\|_{L^2(\mathbb R^3)}\le \frac{C^{k+1}}{t^{k}}k!,\quad\forall t\in]0,T],
   % \end{equation}
    with $\tilde s=\min\{1/2, s\}$.
\end{thm}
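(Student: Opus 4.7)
The plan is to prove \eqref{1-4} by induction on $k$, combining a Gevrey-weighted energy identity with the natural sub-elliptic coercivity of $P:=\langle v\rangle^\gamma((1-\Delta_v)^s+\langle v\rangle^{2s})$. Since $\gamma+2s>0$, after symmetrization one should obtain an estimate of the form
\begin{equation*}
(Pu,u)_{L^2(\mathbb R^3)}\ge c\|\langle D\rangle^s u\|_{L^2}^2+c\|\langle v\rangle^{(\gamma+2s)/2}u\|_{L^2}^2-C\|u\|_{L^2}^2,
\end{equation*}
which drives both the base case $k=0$ (standard energy identity for \eqref{1-2} tested against $u$, followed by Gronwall and the $k=0$ version of \eqref{f}) and the higher-order steps. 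The exponent $2\tilde s$ with $\tilde s=\min\{1/2,s\}$ is tailored so that one round of sub-elliptic regularization, which gains $\tilde s$ derivatives on each side of the inner product, matches exactly one jump on the induction ladder.

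For the inductive step I would assume \eqref{1-4} for all $j\le k-1$, apply $\langle D\rangle^{2\tilde s k}$ to \eqref{1-2}, and test against $w_k:=\langle D\rangle^{2\tilde s k}u$. This yields
\begin{equation*}
\frac12\frac{d}{dt}\|w_k\|_{L^2}^2+c\|\langle D\rangle^{2\tilde s k+\tilde s}u\|_{L^2}^2\le C\|w_k\|_{L^2}^2+(\langle D\rangle^{2\tilde s k}f,w_k)+\bigl([\langle D\rangle^{2\tilde s k},P]u,w_k\bigr),
\end{equation*}
and I would then multiply by the Gevrey weight $t^{2k}/(k!)^2$ and integrate in $t$. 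The term $\int_0^t\tau^{2k-1}\|w_k\|_{L^2}^2/(k!)^2\,d\tau$ produced by differentiating the weight is handled by Sobolev interpolation between $\langle D\rangle^{2\tilde s(k-1)}u$ and $\langle D\rangle^{2\tilde s k+\tilde s}u$, followed by Young's inequality that splits it between the inductive hypothesis at step $k-1$ and the coercive term on the left. The source term is controlled using \eqref{f} together with Cauchy-Schwarz.

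The expected main obstacle is the commutator $[\langle D\rangle^{2\tilde s k},P]u$. Splitting $P=\langle v\rangle^\gamma(1-\Delta_v)^s+\langle v\rangle^{\gamma+2s}$ and expanding each bracket by asymptotic symbolic calculus produces contributions of $\xi$-order at most $2\tilde s k-1+2s$ and $2\tilde s k-1$, both strictly below $2\tilde s k+\tilde s$ when $\tilde s=\min\{1/2,s\}$, so each such term can be reabsorbed partly by the coercive gain on the left and partly estimated by $\|\langle D\rangle^{2\tilde s(k-1)}u\|_{L^2}$. The combinatorial prefactors $\binom{2\tilde s k}{|\beta|}$ from the expansion, together with the $v$-weights $\langle v\rangle^{\gamma+2s-|\beta|}$, have to be tracked carefully and compared against $k!$; these are controlled by exploiting the $\langle v\rangle^{\gamma+2s}$ coercivity, the hypothesis $\gamma+2s>0$, and the freedom to enlarge the constant $C$ in \eqref{1-4}. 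Combining all pieces, the closed recursion yields the claimed estimate.
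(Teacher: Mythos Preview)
Your overall scheme---induction on $k$, energy identity for $(t\langle D\rangle^{2\tilde s})^k u$, G{\aa}rding-type coercivity of $P$, and commutator bookkeeping---matches the paper's. There is, however, a genuine gap: the induction hypothesis you propose (just \eqref{1-4}) is too weak to close the loop.

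The commutators $[\langle D\rangle^{2\tilde s k},\langle v\rangle^{\gamma+2s}]$ and $[\langle D\rangle^{2\tilde s k},\langle v\rangle^{\gamma}](1-\Delta_v)^s$ carry $v$-weights; already the leading symbol of the first has size $\sim k\,\langle\xi\rangle^{2\tilde s k-1}\langle v\rangle^{\gamma+2s-1}$. After pairing with $w_k$ and Cauchy--Schwarz, one factor can be absorbed into the level-$k$ coercive quantities $\|w_k\|_{H^s_{\gamma/2}}$ or $\|w_k\|_{2,\gamma/2+s}$, but the remaining factor is a \emph{weighted} norm of lower-order pieces---in the paper's notation, $\|(t\langle D\rangle^{2\tilde s})^j u\|_{2,\gamma/2+s}$ and $\|(t\langle D\rangle^{2\tilde s})^j u\|_{H^s_{\gamma/2}}$ for $j\le k-1$. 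These are not controlled by the bare $L^2$ bound \eqref{1-4}; your claim that each commutator term can be ``partly estimated by $\|\langle D\rangle^{2\tilde s(k-1)}u\|_{L^2}$'' only works when the residual weight exponent $\gamma/2+s-1$ (or $\gamma/2-1$) is nonpositive, which the hypotheses $\gamma+2s>0$ do not guarantee. The paper's remedy is to strengthen the induction hypothesis (Proposition~\ref{prop 4.1}) to the triple estimate
\[
\bigl\|(t\langle D\rangle^{2\tilde s})^{m}u\bigr\|^2_{L^\infty(]0,T];L^2)}
+\int_0^T\bigl\|(t\langle D\rangle^{2\tilde s})^m u\bigr\|^2_{H^s_{\gamma/2}}\,dt
+\int_0^T\bigl\|(t\langle D\rangle^{2\tilde s})^m u\bigr\|^2_{2,\gamma/2+s}\,dt
\le\bigl(B_1^{m+1}m!\bigr)^2,
\]
and it is precisely the two time-integrated weighted terms at levels $j\le k-1$ that feed the commutator bounds at level $k$ (via an iterative Leibniz-type expansion of $[(t\langle D\rangle^{2\tilde s})^k,\langle v\rangle^m]$ that produces the binomial sum $\sum_{j}(Ct)^{k-j}\binom{k}{j}$, rather than a one-shot symbolic expansion with $k$-dependent seminorms). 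A secondary point: the coercive gain is $\|w_k\|_{H^s_{\gamma/2}}^2$, not $\|\langle D\rangle^{2\tilde s k+\tilde s}u\|_{L^2}^2$; for $\gamma<0$ these differ, and the paper inserts an additional comparison (via Lemma~\ref{lemma2.2}) to recover the unweighted derivative control needed for the $kt^{2k-1}\|\langle D\rangle^{2\tilde s k}u\|_{L^2}^2$ term.
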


\begin{thm}\label{thm2}
    For $0<s<1$, $\gamma+2s>0$, for any $T>0$ and $u_0\in L^2(\mathbb R^3)$. Let $u$ be the solution of the Cauchy problem \eqref{1-2} and
    $$
    e^{t\langle v\rangle^{\frac{\gamma}{2}+s}}f\in L^{2}(\mathbb R^3), \quad \forall t\ge0.
    $$
    Then there exists a constant $C>0$ such that for any $k\in\mathbb N$, we have
    \begin{equation}\label{1-5}
        \left\|\langle\cdot\rangle^{(\frac{\gamma}{2}+s)k}u(t)\right\|_{L^2(\mathbb R^3)}\le \frac{C^{k+1}}{t^{k}}k!,\quad\forall t\in]0,T].
    \end{equation}
\end{thm}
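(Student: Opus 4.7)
The proof of Theorem \ref{thm2} parallels that of Theorem \ref{thm1} with position and momentum variables interchanged: the multiplicative dissipation $\langle v\rangle^{\gamma+2s}$ now plays the role that the fractional Laplacian $(1-\Delta)^s$ plays there. Define $\alpha_k:=k(\tfrac{\gamma}{2}+s)$, $U_k(t):=\|\langle v\rangle^{\alpha_k}u(t)\|_{L^2}$, and $F_k(t):=\|\langle v\rangle^{\alpha_k}f(t)\|_{L^2}$. The target \eqref{1-5} is equivalent to showing $V_k(t):=t^k U_k(t)\le C^{k+1}k!$ on $(0,T]$, which I will prove by induction on $k\in\mathbb N$. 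The hypothesis $e^{t\langle v\rangle^{\alpha_1}}f\in L^2$, together with the elementary inequality $\tfrac{(\tau\langle v\rangle^{\alpha_1})^{k-1}}{(k-1)!}\le e^{\tau\langle v\rangle^{\alpha_1}}$, yields the pointwise bound $F_{k-1}(\tau)\le M_T(k-1)!/\tau^{k-1}$. The base case $V_0\le C$ is the standard $L^2$ energy estimate for \eqref{1-2}, since $(\langle v\rangle^\gamma(1-\Delta)^s u,u)$ and $(\langle v\rangle^{\gamma+2s}u,u)$ are both non-negative.

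For the inductive step I would multiply \eqref{1-2} by $\langle v\rangle^{2\alpha_k}u$ and integrate in $v$. Since $2\alpha_k+\gamma+2s=2\alpha_{k+1}$, the multiplicative dissipation produces exactly $U_{k+1}^2$:
\begin{equation*}
\tfrac12\tfrac{d}{dt}U_k^2 + U_{k+1}^2 + \bigl(\langle v\rangle^\gamma(1-\Delta)^s u,\langle v\rangle^{2\alpha_k}u\bigr) = \bigl(f,\langle v\rangle^{2\alpha_k}u\bigr).
\end{equation*}
The source is bounded by Cauchy--Schwarz using $\alpha_{k-1}+\alpha_{k+1}=2\alpha_k$: $|(f,\langle v\rangle^{2\alpha_k}u)|\le F_{k-1}U_{k+1}\le \tfrac14 U_{k+1}^2+F_{k-1}^2$. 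For the hypoelliptic term, I set $\beta=\alpha_k+\gamma/2$ and symmetrize,
\begin{equation*}
\bigl(\langle v\rangle^\gamma(1-\Delta)^s u,\langle v\rangle^{2\alpha_k}u\bigr) = \|(1-\Delta)^{s/2}(\langle v\rangle^\beta u)\|_{L^2}^2 + \bigl([\langle v\rangle^\beta,(1-\Delta)^s]u,\langle v\rangle^\beta u\bigr),
\end{equation*}
keeping the non-negative first term on the left. The commutator is a pseudo-differential operator whose principal symbol is of order $2s-1$ in $\xi$ and $\beta-1$ in $v$ with coefficient linear in $\beta\sim k$; since $s<1$ the Fourier-side order is strictly below $1$, and by trading against the kept positive piece $\|(1-\Delta)^{s/2}(\langle v\rangle^\beta u)\|^2$ together with Young's inequality one arrives at
\begin{equation*}
\tfrac{d}{dt}U_k^2 + \tfrac14 U_{k+1}^2 \le C k^{2}\,U_k^2 + C\,F_{k-1}^2.
\end{equation*}

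To close the induction, I multiply by $t^{2k}$ and integrate from $0$ to $t$ using $V_k(0)=0$. The elementary Cauchy--Schwarz bound $U_k^2=\int\langle v\rangle^{\alpha_{k-1}}u\cdot\langle v\rangle^{\alpha_{k+1}}u\,dv\le U_{k-1}U_{k+1}$ gives $V_k^2\le V_{k-1}V_{k+1}$; combined with one more Young's inequality to absorb the $V_{k+1}^2/t^2$ term generated by differentiating the $t^{2k}$-weight, the result is
\begin{equation*}
V_k^2(t) \le C'(T) k^{2}\int_0^t V_{k-1}^2(\tau)\,d\tau + C''\int_0^t \tau^{2k}F_{k-1}^2(\tau)\,d\tau.
\end{equation*}
Inserting the induction hypothesis $V_{k-1}(\tau)\le C^k(k-1)!$ and the pointwise bound on $F_{k-1}$, and invoking the algebraic identity $k^2((k-1)!)^2=(k!)^2$, one obtains $V_k^2(t)\le C_*(T,M_T)\,C^{2k}(k!)^2$, which closes the induction provided $C$ is chosen large enough depending on $T$ and $M_T$.

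The main obstacle is the commutator step: the induction at the factorial level $V_k\le C^{k+1}k!$ forces the $k$-dependence in the absorption bound to be exactly $k^2$, which in turn demands a pseudo-differential commutator estimate whose constant is at worst linear in $\beta$. Obtaining this sharp rate requires careful use of the positive term $\|(1-\Delta)^{s/2}(\langle v\rangle^\beta u)\|^2$ to absorb the $\langle D\rangle^{2s-1}$ factor appearing in the principal commutator symbol when $s\ge 1/2$, together with interpolation between the weighted norms $U_k$ and $U_{k+1}$ on the $v$-side; the strict bound $s<1$ enters essentially since a naive estimate that produced a higher power of $k$ would break the factorial accounting $k^2((k-1)!)^2=(k!)^2$ that closes the argument.
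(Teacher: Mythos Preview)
Your overall strategy mirrors the paper's, but the factorial bookkeeping does not close as written, and the gap is exactly at the step you flag as the ``main obstacle.'' From your differential inequality
\[
\tfrac{d}{dt}U_k^2+\tfrac14 U_{k+1}^2\le Ck^2 U_k^2+CF_{k-1}^2,
\]
multiplying by $t^{2k}$ produces, besides the time-derivative term $2kt^{2k-1}U_k^2$ (which your interpolation $U_k^2\le U_{k-1}U_{k+1}$ handles correctly, costing $k^2V_{k-1}^2$), the term $Ck^2V_k^2$. To reduce this to $V_{k-1}$ you must again use $V_k^2\le V_{k-1}V_{k+1}$ and Young's inequality, and this second application costs \emph{another} factor $k^2$: one obtains $\epsilon\, t^{2k}U_{k+1}^2+C_\epsilon k^4 t^2 V_{k-1}^2$, not $k^2V_{k-1}^2$. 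The resulting bound $V_k^2\lesssim k^4\int_0^t V_{k-1}^2$ gives $V_k^2\lesssim C^{2k}k^2(k!)^2$, and the stray $k^2$ prevents the induction from closing. In short, a right-hand side of the form $Ck^2U_k^2$ is already too lossy; you would need $C\,U_k^2$ with $C$ independent of $k$.

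The paper avoids this by two devices that are missing from your scheme. First, instead of estimating $[\langle v\rangle^{\beta},(1-\Delta)^s]$ with a single $\beta$-dependent constant, it expands the commutator with $(t\langle v\rangle^{\gamma/2+s})^k\langle v\rangle^{\gamma/2}$ in Leibniz fashion (as in the analogue of \eqref{4-4-k}), producing a sum $\sum_{j=1}^{k}(C_7t)^{k-j+1}\binom{k}{j}\|(t\langle v\rangle^{\gamma/2+s})^{j}u\|_{H^{2s-1}_{\gamma/2}}$ with a constant $C_7$ \emph{independent of $k$}; the $j=k$ term then contributes only a $k$-free $\tilde C_7\|(t\langle v\rangle^{\gamma/2+s})^{k}u\|_{L^2}^2$ after absorption. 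Second, the induction hypothesis carries not only the pointwise norm $V_m$ but also the time-integrated quantities $\int_0^T\|(\tau\langle v\rangle^{\gamma/2+s})^{m}u\|_{H^s_{\gamma/2}}^2d\tau$ and $\int_0^T\|(\tau\langle v\rangle^{\gamma/2+s})^{m}u\|_{2,\gamma/2+s}^2d\tau$. The key identity $\|(t\langle v\rangle^{\gamma/2+s})^{k}u\|_{L^2}^2=t^2\|(t\langle v\rangle^{\gamma/2+s})^{k-1}u\|_{2,\gamma/2+s}^2$ then lets the $\tilde C_7$-term feed directly into the $(k-1)$-level integrated bound, with no $k$-dependent prefactor at all. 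Your induction tracks only $V_k$, so this route is unavailable to you; either enlarge the induction quantity or replace the ``linear-in-$\beta$'' commutator estimate by the Leibniz expansion.
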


Remark: We have then proven that the solution of Cauchy problem \eqref{1-2} is belong to Gelfand-Shilov space, i.e.
$$
u(t)\in S^{\frac{1}{2\tilde s}}_{\frac{1}{\frac{\gamma}{2}+s}}(\mathbb{R}^3),\qquad t>0.
$$

This paper is organized as follows. In section 2, we first consider the estimations of the commutator for the operator $\langle D\rangle^{r}$ with $r>0$ in $L^{2}(\mathbb R^{3})$. Then we give the interpolations of Sobolev spaces. Section 3 shows the energy estimation of the Cauchy problem \eqref{1-2}. In section 4, we construct the Gelfand-Shilov regularity to the solution of the Cauchy problem \eqref{1-2}.

\section{Estimations of Commutator and Interpolation}

In the following, the notation $A\lesssim B$ means there exists a constant $C>0$ such that $A\le C B$, and the notation $[T_1, T_2]$ means $T_1T_2-T_2T_1$, which denotes the commutator. For simplicity, we denote the weighted Lebesgue spaces and the weighted Sobolev spaces
\begin{equation*}
    \left\|\langle\cdot\rangle^\gamma f\right\|_{L^p(\mathbb R^3)}=\|f\|_{p, \gamma},\quad 1\le p\le\infty,
\end{equation*}
\begin{equation*}
    \left\|\langle\cdot\rangle^\gamma f\right\|_{H^m(\mathbb R^3)}=\|f\|_{H^m_\gamma(\mathbb R^3)},\quad m\in\mathbb R,
\end{equation*}
with $\gamma\in\mathbb R$, where we use the notation $\langle v\rangle=(1+|v|^2)^{\frac12}$.

For later use, we need the following estimation of the commutator for the operator $\langle D\rangle^{r}=(1-\Delta)^{\frac{r}{2}}$ with $r>0$.

\begin{lemma}\label{lemma2.2}
     Let $u\in\mathcal S(\mathbb R^{3})$, then for all $m\in\mathbb R$ and $r>0$, there exists a constant $C_{1}>0$, depends on $m$ and $r$, such that
      $$\left\|\left[(1-\Delta)^{\frac{r}{2}}, \langle \cdot\rangle^{m}\right]u\right\|_{L^{2}(\mathbb R^{3})}\le C_{1}\|u\|_{H^{r-1}_{m}}.$$
\end{lemma}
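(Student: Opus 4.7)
The plan is to express the commutator as an oscillatory integral, substitute a line-integral formula for $\langle v'\rangle^m-\langle v\rangle^m$, and integrate by parts once in the Fourier variable $\xi$ to lower the symbol order from $r$ to $r-1$. The resulting operator is then a pseudodifferential operator of order $r-1$ acting on the weighted function $w=\langle\cdot\rangle^m u$, which will give the bound $\|[(1-\Delta)^{r/2},\langle\cdot\rangle^m]u\|_{L^2}\le C\|\langle D\rangle^{r-1}w\|_{L^2}=C\|u\|_{H^{r-1}_m}$.

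More concretely, for $u\in\mathcal S(\mathbb R^3)$ I would start from the kernel representation
\begin{equation*}
[(1-\Delta)^{r/2},\langle\cdot\rangle^m]u(v)=(2\pi)^{-3}\iint e^{i(v-v')\cdot\xi}(1+|\xi|^2)^{r/2}\bigl(\langle v'\rangle^m-\langle v\rangle^m\bigr)u(v')\,dv'\,d\xi.
\end{equation*}
Setting $w(v')=\langle v'\rangle^m u(v')$ and using the identity
\begin{equation*}
\langle v'\rangle^m-\langle v\rangle^m=\int_0^1(v'-v)\cdot\nabla\langle\cdot\rangle^m\bigl(v+\tau(v'-v)\bigr)\,d\tau,
\end{equation*}
the factor $v'-v$ is converted to $i\nabla_\xi$ acting on the exponential, and a single integration by parts in $\xi$ moves the derivative onto $(1+|\xi|^2)^{r/2}$. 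This replaces the order-$r$ symbol by $\nabla_\xi(1+|\xi|^2)^{r/2}\in S^{r-1}_{1,0}$, while the remaining amplitude is the $(v,v')$-dependent weight
\begin{equation*}
b(v,v')=\int_0^1\frac{\nabla\langle\cdot\rangle^m\bigl(v+\tau(v'-v)\bigr)}{\langle v'\rangle^m}\,d\tau,
\end{equation*}
which multiplies $w(v')$ inside the integral.

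To control $b$ I would use the elementary bound $|\nabla\langle w\rangle^m|\le C_m\langle w\rangle^{m-1}$ and split into two regions. On the diagonal region $|v-v'|\le\tfrac12\langle v'\rangle$, one has $\langle v+\tau(v'-v)\rangle\simeq\langle v'\rangle$ uniformly in $\tau\in[0,1]$, so $|b(v,v')|\le C\langle v'\rangle^{-1}\le C$. On the off-diagonal region $|v-v'|>\tfrac12\langle v'\rangle\ge\tfrac12$, the distance $|v-v'|$ is bounded below away from zero, so performing $N$ further integrations by parts in $\xi$ produces a factor $|v-v'|^{-N}$ in the kernel that absorbs any polynomial growth of $b$. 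Combining both regions with the standard $L^2$-boundedness of pseudodifferential operators with symbols in $S^{r-1}_{1,0}$ (or, alternatively, Schur's test on the resulting integral kernel) yields the claimed estimate.

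The main technical obstacle is precisely this off-diagonal estimate: the ratio $\nabla\langle w\rangle^m/\langle v'\rangle^m$ is not uniformly bounded when the intermediate point $w=v+\tau(v'-v)$ lies far from $v'$, so one must trade extra kernel decay for weight growth through repeated integration by parts in $\xi$. Apart from this, the argument is a routine commutator calculation that exploits the fact that $(1+|\xi|^2)^{r/2}$ depends only on $\xi$ and hence the commutator with a multiplier loses exactly one order of smoothness.
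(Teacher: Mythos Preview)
Your approach is correct and takes a genuinely different route from the paper's. The paper sets $g=\langle v\rangle^m u$ and analyzes the conjugated operator $\langle v\rangle^m(1-\Delta)^{r/2}\langle v\rangle^{-m}$ via the standard pseudodifferential composition expansion
\[
\langle v\rangle^m(1-\Delta)^{r/2}\langle v\rangle^{-m}=\sum_{0\le|\alpha|<N}\frac{1}{\alpha!}\,\langle v\rangle^m D_v^\alpha\langle v\rangle^{-m}\,a^{(\alpha)}(D_v)+\langle v\rangle^m r_N(v,D_v),
\]
observing that each coefficient $\langle v\rangle^m D_v^\alpha\langle v\rangle^{-m}$ is bounded (of size $\langle v\rangle^{-|\alpha|}$), and then estimating the oscillatory-integral remainder explicitly to obtain $\langle v\rangle^m r_N\in\Psi^{r-N}_{1,0}$. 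The commutator acting on $g$ is thus a sum of genuine $S^{r-j}_{1,0}$ operators with $j\ge 1$, and no $(v,v')$-dependent amplitude or near/far splitting ever appears. Your first-order Taylor expansion followed by a single $\xi$-integration by parts is more elementary and makes the order drop from $r$ to $r-1$ completely explicit, but the price is the amplitude $b(v,v')$, whose failure to be globally bounded forces the diagonal/off-diagonal decomposition.

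One small point to tighten: on the off-diagonal piece, Schur's test alone yields only $L^2\to L^2$ boundedness, which for $0<r<1$ does not directly give the claimed $H^{r-1}\to L^2$ bound (since then $\|\cdot\|_{H^{r-1}}\le\|\cdot\|_{L^2}$). The fix is already implicit in your argument---the repeated $\xi$-integrations by parts actually produce an amplitude of arbitrarily negative order in $\xi$ with bounded $(v,v')$-coefficients, so the off-diagonal part is smoothing and maps $H^s\to L^2$ for every $s$. The paper's conjugation trick sidesteps this issue automatically, which is the main practical advantage of its approach.
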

\begin{proof}
     Let $\langle v\rangle^{m}u=g$, then we have
     $$\left[(1-\Delta)^{\frac{r}{2}}, \langle v\rangle^{m}\right]u=(1-\Delta)^{\frac{r}{2}}g-\langle v\rangle^{m}(1-\Delta)^{\frac{r}{2}}\left(\langle v\rangle^{-m}g\right).$$
     Since $(1-\Delta)^{\frac{r}{2}}$ can be viewed as a pseudo-differential operator of symbol $(1+|\xi|^{2})^{\frac{r}{2}}$, it follows that for any $N\in\mathbb N_{+}$,
     \begin{equation*}
         \langle v\rangle^{m}(1-\Delta)^{\frac{r}{2}}\langle v\rangle^{-m}=\sum_{0\le|\alpha|<N}\frac{1}{\alpha!}\langle v\rangle^{m}D_{v}^{\alpha}\langle v\rangle^{-m}a^{(\alpha)}(D_{v})+\langle v\rangle^{m}r_{N}(v,D_{v}),
     \end{equation*}
     where $a^{(\alpha)}(\xi)=\partial^{\alpha}_{\xi}\langle\xi\rangle^{r}$, and
     $$r_{N}(v,\xi)=N\sum_{|\alpha|=N}\int_{0}^{1}\frac{(1-\theta)^{N-1}}{\alpha!}r_{\theta, \alpha}(v, \xi)d\theta,$$
     with $r_{\theta, \alpha}(v, \xi)$ is the oscillating integral, defined via
     $$r_{\theta, \alpha}(v, \xi)=Os-\iint e^{-v'\eta}D^{\alpha}\langle v+v'\rangle^{-m}a^{(\alpha)}(\xi+\theta\eta)\frac{dv'd\eta}{(2\pi)^{3}}.$$
     Using the identity
     $$e^{-iv'\eta}=\langle v'\rangle^{-2l '}\left(1-\Delta_{\eta}\right)^{l'}e^{-iv'\eta}=\langle\eta\rangle^{-2l}\left(1-\Delta_{v'}\right)^{l}e^{-iv'\eta},$$
     then from the fact $2l'>|m|+N+|\beta|+3$, $2l>r+N+|\beta'|+3$, integration by parts and using the Leibniz formula, we can deduce that
     \begin{equation*}
        \begin{split}
             &D^{\beta}_{v}\partial^{\beta'}_{\xi}\left(\langle v\rangle^{m}r_{\theta,\alpha}(v,\xi)\right)\\%&=Os-\iint e^{-iv'\eta}m^{(\alpha+\beta')}(\xi+\theta\eta)D_{v}^{\alpha+\beta}\langle v+v'\rangle^{\gamma/2}\frac{dv'd\eta}{(2\pi)^{3}}\\
             %&=\int\bigg(\int e^{-iv'\eta}\langle v'\rangle^{-2l'}(1-\Delta_{\eta})^{l'}\big(\langle\eta\rangle^{-2l}\\
             %&\quad\times(1-\Delta_{v'})^{l}m^{(\alpha+\beta')}(\xi+\theta\eta)D_{v}^{\alpha+\beta}\langle v+v'\rangle^{\gamma/2}\big)\frac{d\eta}{(2\pi)^{3}}\bigg)d v '\\
             %&=\sum_{\beta_{1}+\beta_{2}=\beta}C_{\beta}^{\beta_{1}}D^{\beta_{1}}_{v}\langle v\rangle^{-m}\int(1-\Delta_{v'})^{l}D^{\alpha+\beta_{2}}\langle v+v'\rangle^{\gamma}\bigg(\int e^{-iv'\eta}\\
             %&\quad\times(1-\Delta_{\eta})^{l'}\left(\langle\eta\rangle^{-2l}b^{(\alpha+\beta')}(\xi+\theta\eta)\bigg)\frac{d\eta}{(2\pi)^{3}}\right)\frac{dv'}{\langle v'\rangle^{2l'}}\\
             %&=\int(1-\Delta_{v'})^{l}D_{v}^{\alpha+\beta}\langle v+v'\rangle^{\gamma}\bigg(\int_{|\eta|\le\frac{\langle\xi\rangle}{2}}+\int_{|\eta|\ge\frac{\langle\xi\rangle}{2}}\\
             %&\quad e^{-iv'\eta}(1-\Delta_{\eta})^{l'}\left(\langle\eta\rangle^{-2l}a^{(\alpha+\beta')}(\xi+\theta\eta)\right)\frac{d\eta}{(2\pi)^{3}}\bigg)\frac{dv'}{\langle v'\rangle^{2l'}}\\
             &=\sum_{\beta_{1}+\beta_{2}=\beta}C_{\beta}^{\beta_{1}}D^{\beta_{1}}_{v}\langle v\rangle^{m}\int_{\mathbb R^{3}}(1-\Delta_{\eta})^{l'}a^{(\alpha+\beta')}(\xi+\theta\eta)G(v, \eta)\frac{d\eta}{\langle \eta\rangle^{2l}(2\pi)^{3}},
        \end{split}
    \end{equation*}
    where
    $$G(v, \eta)=\int_{\mathbb R^{3}}e^{-iv'\eta}(1-\Delta_{v'})^{l}\left(\langle v'\rangle^{-2l'}D^{\alpha+\beta_{2}}\langle v+v'\rangle^{-m}\right)d v '.$$
    By using the Leibniz formula and the fact $2l'>|m|+N+|\beta|+3$, one gets
    \begin{equation*}
        \begin{split}
            |G(v, \eta)|&\le\int_{\mathbb R_{3}}\left|(1-\Delta_{v'})^{l}\left(\langle v'\rangle^{-2l'}D^{\alpha+\beta_{2}}\langle v+v'\rangle^{-m}\right)\right|dv'\\
            &\le c_{1}\sum_{|\sigma|\le 2l}\int_{\mathbb R_{3}}\left|\partial^{\sigma}_{v'}\left(\langle v'\rangle^{-2l'}D^{\alpha+\beta_{2}}\langle v+v'\rangle^{-m}\right)\right|dv'\\
            %&\lesssim\int_{\mathbb R_{3}}\langle\eta\rangle^{-2l}\langle\xi+\theta\eta\rangle^{m-N-|\beta'|}dv'\\
            &\le c_{2}\langle v\rangle^{-m-N-|\beta_{2}|}\int_{\mathbb R_{3}}\langle v'\rangle^{-2l'+|m|+N+|\beta_{2}|}dv'\le\tilde c_{2}\langle v\rangle^{-m-N-|\beta_{2}|},
        \end{split}
    \end{equation*}
    with the constant $\tilde c_{2}$ depends on $m$.
Substituting it into $D^{\beta}_{v}\partial^{\beta'}_{\xi}\left(\langle v\rangle^{m}r_{\theta, \alpha}(v, \xi)\right)$, since $\theta\in]0,1[$, by using Peetre's inequality, we have for all $v\in\mathbb R^{3}$
    \begin{equation}\label{r_theta alpha}
        \begin{split}
             &\left|D^{\beta}_{v}\partial^{\beta'}_{\xi}\left(\langle v\rangle^{m}r_{\theta,\alpha}(v,\xi)\right)\right|\le\tilde c_{2}\int_{\mathbb R^{3}}\langle \eta\rangle^{-2l}\langle\xi+\theta\eta\rangle^{r-N-|\beta'|}d\eta\\
             &\le c_{3}\langle\xi\rangle^{r-N-|\beta'|}\int_{\mathbb R^{3}}\langle \eta\rangle^{-2l+|r|+N+|\beta'|}d\eta\le\tilde c_{3}\langle\xi\rangle^{r-N-|\beta'|},
        \end{split}
    \end{equation}
    here we use the fact $2l>r+N+|\beta'|+3$, and the constant $\tilde c_{3}$ depends on $m$, $r$. So that we can obtain 
    $$\langle v\rangle^{m}r_{N}(v, D)\in\Psi_{1,0}^{r-N}.$$
    And therefore, $\langle v\rangle^{m}r_{N}(v, D)(1-\Delta)^{\frac{N-r}{2}}\in\Psi_{1,0}^{0}$, then it follows that
    \begin{equation*}
        \begin{split}
            \left\|[(1-\Delta)^{\frac{r}{2}}, \langle \cdot\rangle^{m}]u\right\|_{L^{2}}&\lesssim\sum_{j=1}^{N-1}\left\|(1-\Delta)^{\frac{r-j}{2}}g\right\|_{L^{2}}+\left\|\langle\cdot\rangle^{m}r_{N}(\cdot,D)g\right\|_{L^{2}}\\
            &\lesssim\sum_{j=1}^{N-1}\left\|g\right\|_{H^{r-j}}+\left\|g\right\|_{H^{r-N}}\le C_{1}\left\|u\right\|_{H^{r-1}_{m}},
        \end{split}
    \end{equation*}
    with $C_{1}$ depends on $m$ and $r$.
\end{proof}

\begin{remark}
Taking $r=2s$ with $0<s\le1/2$, we have
\begin{equation}\label{0-s-1/2}
        \begin{split}
            \left\|[(1-\Delta)^{s}, \langle \cdot\rangle^{m}]u\right\|_{L^{2}}\le C_{2}\|u\|_{2, m},
        \end{split}
    \end{equation}
    meanwhile, for $1/2<s<1$, we have
    \begin{equation}\label{1/2-s-1}
        \begin{split}
            \left\|[(1-\Delta)^{s}, \langle \cdot\rangle^{m}]u\right\|_{L^{2}}\le C_{3}\|u\|_{H^{2s-1}_{m}},
        \end{split}
    \end{equation}
    with $C_{2}$, $C_{3}$ depend on $m$ and $r$.
\end{remark}

And we also need the following interpolations. Firstly, we give the compactness (Aubin-Lions) lemma, which plays an important role in the proof of Lemma \ref{interpolation}.
\begin{lemma}(~\cite{S-1})\label{Lions}
     Let $E_{1}, E_{2}, E_{3}$ be the Banach spaces satisfy
     $$E_{1}\hookrightarrow E_{2}\hookrightarrow E_{3},$$
     and the embedding $E_{1}\hookrightarrow E_{3}$ is compact, then the embedding $E_{1}\hookrightarrow E_{2}$ is compact if and only if for any $\epsilon>0$, there exist a constant $C_{\epsilon}>0$, depends on $\epsilon$, such that
     $$\|x\|_{E_{2}}\le\epsilon\|x\|_{E_{1}}+C_{\epsilon}\|x\|_{E_{3}},\quad x\in E_{1}.$$
\end{lemma}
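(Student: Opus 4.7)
The plan is to prove the stated equivalence by handling the two implications separately. Both directions rely on extracting convergent subsequences: the sufficiency uses the given compactness of the outer embedding $E_1 \hookrightarrow E_3$, while the necessity uses the hypothetical compactness of $E_1 \hookrightarrow E_2$ in a contradiction argument. Throughout, I would rely on the standard convention that the symbol $\hookrightarrow$ denotes a continuous and injective embedding, a point that becomes important at the end of the necessity step.

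For the sufficiency direction ($\Leftarrow$), let $\{x_n\} \subset E_1$ be bounded, say $\|x_n\|_{E_1} \le M$. By the compactness of $E_1 \hookrightarrow E_3$, some subsequence (still denoted $x_n$) is Cauchy in $E_3$. For an arbitrary $\epsilon > 0$ I would apply the assumed interpolation inequality to the differences:
$$\|x_n - x_m\|_{E_2} \le \epsilon \|x_n - x_m\|_{E_1} + C_\epsilon \|x_n - x_m\|_{E_3} \le 2M\epsilon + C_\epsilon \|x_n - x_m\|_{E_3}.$$
Letting $n,m \to \infty$ gives $\limsup_{n,m} \|x_n - x_m\|_{E_2} \le 2M\epsilon$, and since $\epsilon$ is arbitrary the subsequence is Cauchy, hence convergent, in $E_2$. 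This yields the compactness of $E_1 \hookrightarrow E_2$.

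For the necessity direction ($\Rightarrow$), I would argue by contradiction. If the inequality fails, there exist $\epsilon_0 > 0$ and a sequence $\{x_n\} \subset E_1$ such that
$$\|x_n\|_{E_2} > \epsilon_0 \|x_n\|_{E_1} + n\,\|x_n\|_{E_3}.$$
After normalizing to $\|x_n\|_{E_2} = 1$, we obtain $\|x_n\|_{E_1} < \epsilon_0^{-1}$, so $\{x_n\}$ is bounded in $E_1$, while $\|x_n\|_{E_3} < 1/n \to 0$. The assumed compactness of $E_1 \hookrightarrow E_2$ supplies a subsequence converging in $E_2$ to some limit $x$ with $\|x\|_{E_2} = 1$. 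By continuity of $E_2 \hookrightarrow E_3$, this subsequence also converges to $x$ in $E_3$; but it converges to $0$ in $E_3$ as well, so $x = 0$ in $E_3$, and the injectivity of $E_2 \hookrightarrow E_3$ forces $x = 0$ in $E_2$, contradicting $\|x\|_{E_2} = 1$.

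The main obstacle is not computational but conceptual: the necessity argument hinges on choosing the correct normalization of a counterexample sequence so that it is simultaneously bounded in $E_1$ and tends to zero in $E_3$, and on invoking the injective nature of the embeddings at the final step. Apart from these two subtleties, the argument is a routine compactness-plus-contradiction exercise, and no delicate estimates are needed beyond the inequality hypothesized in the statement.
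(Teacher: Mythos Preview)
Your argument is correct and is essentially the standard proof of this Ehrling/Lions-type equivalence. Note, however, that the paper does not supply its own proof of this lemma: it is stated with a citation to \cite{S-1} and used as a black box in the proof of Lemma~\ref{interpolation}. There is therefore nothing in the paper to compare your argument against; you have simply filled in a result the authors chose to quote.

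One minor remark on your necessity step: the normalization $\|x_n\|_{E_2}=1$ and the appeal to injectivity of $E_2\hookrightarrow E_3$ are exactly the right moves, and they do not require the standing hypothesis that $E_1\hookrightarrow E_3$ is compact---that hypothesis is only needed for the sufficiency direction, as your proof reflects.
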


\begin{lemma}\label{interpolation}
     For $u\in\mathcal S(\mathbb R^{3})$. Let $k,l>0$ and $\delta>0$, then for any $\epsilon>0$, there exist a constant $C_{\epsilon}>0$, depends on $\epsilon$, such that
     $$\|u\|_{H^{k}_{l}(\mathbb R^{3})}\le\epsilon\|u\|_{H^{k+\delta}_{l}(\mathbb R^{3})}+C_{\epsilon}\|u\|_{L^{2}(\mathbb R^{3})}.$$
\end{lemma}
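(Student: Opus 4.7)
The plan is to invoke the Aubin--Lions criterion (Lemma \ref{Lions}) with $E_1=H^{k+\delta}_l(\mathbb R^3)$, $E_2=H^k_l(\mathbb R^3)$, and $E_3=L^2(\mathbb R^3)$. The desired inequality is precisely the ``compactness implies interpolation'' direction of that lemma, so it suffices to verify (i) the continuous chain $E_1\hookrightarrow E_2\hookrightarrow E_3$; (ii) compactness of $E_1\hookrightarrow E_3$; and (iii) compactness of $E_1\hookrightarrow E_2$.

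Items (i) and (ii) are the easy part. Since $l,\delta>0$ and $\langle v\rangle\ge 1$, one has
$$\|u\|_{L^2}\le\|\langle v\rangle^l u\|_{L^2}\le\|\langle v\rangle^l u\|_{H^k}=\|u\|_{H^k_l}\le\|\langle v\rangle^l u\|_{H^{k+\delta}}=\|u\|_{H^{k+\delta}_l},$$
which is (i). For (ii), given a bounded sequence $\{u_n\}\subset E_1$, I would set $g_n:=\langle v\rangle^l u_n$, bounded in $H^{k+\delta}(\mathbb R^3)$. Applying classical Rellich--Kondrachov on each ball $B_R$ and extracting a diagonal subsequence gives $L^2_{\mathrm{loc}}(\mathbb R^3)$-convergence, and the weight provides uniform tail decay
$$\int_{|v|>R}|u_n|^2\,dv=\int_{|v|>R}\langle v\rangle^{-2l}|g_n|^2\,dv\le\langle R\rangle^{-2l}\sup_n\|g_n\|_{L^2}^2,$$
which tends to $0$ as $R\to\infty$ uniformly in $n$. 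Combining these two facts upgrades the local convergence to $L^2(\mathbb R^3)$-convergence.

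The main technical obstacle is (iii), the compactness of $H^{k+\delta}_l\hookrightarrow H^k_l$. The plan is to adapt the argument for (ii) at the stronger norm: introduce a smooth spatial cutoff $\chi_R\in C_c^\infty(\mathbb R^3)$ with $\chi_R\equiv 1$ on $B_R$ and $\operatorname{supp}\chi_R\subset B_{2R}$, and decompose $u_n=\chi_R u_n+(1-\chi_R)u_n$. On the compactly supported piece, Rellich--Kondrachov applied to the bounded sequence $\{\chi_R g_n\}\subset H^{k+\delta}(B_{2R})$ extracts a subsequence with $\chi_R g_n$ Cauchy in $H^k$, hence $\chi_R u_n=\langle v\rangle^{-l}\chi_R g_n$ Cauchy in $H^k_l$. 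For the tail piece, the commutator estimate of Lemma \ref{lemma2.2} lets me move $\langle v\rangle^l$ past $\langle D\rangle^k$ modulo a lower-order remainder, reducing matters to estimating $\|(1-\chi_R)\langle D\rangle^k g_n\|_{L^2}$; the extra $\delta$ of smoothness carried by $\|u_n\|_{H^{k+\delta}_l}$ together with the decay $\langle v\rangle^{-\delta}\le R^{-\delta}$ on $\operatorname{supp}(1-\chi_R)$ must then be exploited to obtain a tail bound uniform in $n$. This simultaneous juggling of weight and smoothness is the delicate step. A diagonal extraction over $R\to\infty$ then produces a subsequence Cauchy in $H^k_l$, establishing (iii), after which Lemma \ref{Lions} delivers the desired interpolation inequality.
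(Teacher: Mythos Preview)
Your step (iii) cannot succeed: with the paper's convention $\|u\|_{H^m_l}=\|\langle v\rangle^l u\|_{H^m}$, the map $u\mapsto\langle v\rangle^l u$ is an isometric isomorphism $H^m_l(\mathbb R^3)\to H^m(\mathbb R^3)$, so compactness of $H^{k+\delta}_l\hookrightarrow H^k_l$ is \emph{equivalent} to compactness of $H^{k+\delta}(\mathbb R^3)\hookrightarrow H^k(\mathbb R^3)$, which fails on the whole space. Your tail argument conflates regularity with decay: the ``extra $\delta$'' sits in the Sobolev index, not in the weight, so there is no factor $\langle v\rangle^{-\delta}$ to harvest on $\operatorname{supp}(1-\chi_R)$. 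Concretely, for $u_n=\langle v\rangle^{-l}\phi(\cdot-ne_1)$ with $\phi\in C_c^\infty$ fixed, one has $g_n=\langle v\rangle^l u_n=\phi(\cdot-ne_1)$; for $n>2R$ the tail piece equals all of $g_n$, and $\|g_n\|_{H^k}=\|\phi\|_{H^k}$ is not small.

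By the very ``if and only if'' in Lemma~\ref{Lions} that you invoke, the failure of (iii) (combined with your correct (ii)) actually shows the stated inequality to be \emph{false}: the same sequence $u_n$ has $\|u_n\|_{H^k_l}=\|\phi\|_{H^k}$ and $\|u_n\|_{H^{k+\delta}_l}=\|\phi\|_{H^{k+\delta}}$ constant while $\|u_n\|_{L^2}\to 0$, so no $C_\epsilon$ works once $\epsilon<\|\phi\|_{H^k}/\|\phi\|_{H^{k+\delta}}$. The paper's own argument shares this gap: it proves only that $H^m_l\hookrightarrow L^2$ is compact and then appeals to Lemma~\ref{Lions} without ever verifying that $E_1\hookrightarrow E_2$ is compact. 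A correct statement replaces $\|u\|_{L^2}$ on the right by $\|u\|_{2,l}$; then, via $g=\langle v\rangle^l u$, it reduces to the standard unweighted interpolation $\|g\|_{H^k}\le\epsilon\|g\|_{H^{k+\delta}}+C_\epsilon\|g\|_{L^2}$, and this weaker form is what is actually used downstream.
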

\begin{proof}
    From Lemma \ref{Lions}, the key idea of the proof is for any $m>0$, the embedding $H^{m}_{l}(\mathbb R^{3})$ to $L^{2}(\mathbb R^{3})$ is compact.
    For any $\epsilon>0$, choose the constant $R>0$ sufficiently large such that
    \begin{equation}\label{2-4-1}
        (1+|v|^{2})^{-l}\le\epsilon,\quad|v|\ge R/2.
    \end{equation}
    Fixing a cutoff function $\phi\in C^{\infty}(\mathbb R^{3})$, with the properties that $0\le\phi(v)\le1$, and
\begin{equation*}
\phi(v)=\left\{
\begin{aligned}
 &1, \quad v\in B_{R/2}(0),\\
 &0, \quad v\in B^{c}_{R}(0),
\end{aligned}
\right.
\end{equation*}
here $B_{R/2}(0)=\{v: |v|\le1\}$ and $B^{c}_{R}(0)=\{v: |v|>1\}$. From the properties of the Fourier transform, one has $\langle\xi\rangle^{m}\hat\phi\in L^{1}(\mathbb R^{3})$.

    Then by using the Plancherel Theorem and Young inequality, it follows that
    \begin{equation}\label{2-4-2}
       \begin{split}
           &\|\phi u\|_{H^{m}(\mathbb R^{3})}=\left\|\langle\cdot\rangle^{m}\left(\hat\phi*\hat u\right)\right\|_{L^{2}(\mathbb R^{3})}\\
           &\le2^{m}\left\|\left(\langle\cdot\rangle^{m}\hat\phi\right)*\left(\langle\cdot\rangle^{m}\hat u\right)\right\|_{L^{2}(\mathbb R^{3})}\lesssim\left\|\langle\cdot\rangle^{m}\hat u\right\|_{L^{2}(\mathbb R^{3})}=\|u\|_{H^{m}(\mathbb R^{3})}.
        \end{split}
    \end{equation}
    Assume that $\{\langle v\rangle^{l}u_{j}\}$ is a bounded sequence in $H^{m}(\mathbb R^{3})$, then by using the Sobolev inequality, one gets $\{u_{j}\}$ is bounded in $L^{2}(\mathbb R^{3})$. Since the embedding $H^{m}(B_{R}(0))$ to $L^{2}(B_{R}(0))$ is compact, by the inequality \eqref{2-4-2} we can obtain $\{\phi\langle v\rangle^{l}u_{j}\}$ is bounded in $H^{m}(B_{R}(0))$, which implies the subsequence $\{\phi\langle v\rangle^{l}u_{j'}\}\subset\{\phi\langle v\rangle^{l}u_{j}\}$ converges, that is for any $\epsilon>0$, there exists the large $N\in\mathbb N$ such that
    $$\left\|\phi\langle\cdot\rangle^{l}(u_{j'}-u_{n'})\right\|_{L^{2}(B_{R}(0))}\le\epsilon,\quad\forall j', n'\ge N.$$
    And therefore from $\eqref{2-4-1}$, we can conclude
    \begin{equation*}
        \begin{split}
             \|u_{j'}-u_{n'}\|_{L^{2}(\mathbb R^{3})}
             &\le\left\|\phi(u_{j'}-u_{n'})\right\|_{L^{2}(\mathbb R^{3})}+\left\|(1-\phi)(u_{j'}-u_{n'})\right\|_{L^{2}(\mathbb R^{3})}\\
             &\le\left\|\phi\langle\cdot\rangle^{l}(u_{j'}-u_{n'})\right\|_{L^{2}(B_{R}(0))}+\|u_{j'}-u_{n'}\|_{L^{2}(|v|\ge R/2)}\\
             &\le\epsilon+\left\|\langle\cdot\rangle^{-l}\langle\cdot\rangle^{l}(u_{j'}-u_{n'})\right\|_{L^{2}(|v|\ge R/2)}\\
             &\le\epsilon+\epsilon\left(\left\|\langle\cdot\rangle^{l}u_{j'}\right\|_{H^{m}(\mathbb R^{3})}+\left\|\langle\cdot\rangle^{l}u_{n'}\right\|_{H^{m}(\mathbb R^{3})}\right)\lesssim\epsilon,
        \end{split}
    \end{equation*}
    which implies $\{u_{j'}\}$ is a Cauchy sequence in $L^{2}(\mathbb R^{3})$, then by the completeness of $L^{2}(\mathbb R^{3})$ we can obtain that $\{u_{j'}\}$ converges in $L^{2}(\mathbb R^{3})$. Thus for any $m>0$, the embedding $H^{m}_{l}(\mathbb R^{3})$ to $L^{2}(\mathbb R^{3})$ is compact.

     Finally, let $m=k$ and $m=k+\delta$ respectively, then from Lemma \ref{Lions}, it follows that for any $\epsilon>0$,
    $$\|u\|_{H^{k}_{l}(\mathbb R^{3})}\le\epsilon\|u\|_{H^{k+\delta}_{l}(\mathbb R^{3})}+C_{\epsilon}\|u\|_{L^{2}(\mathbb R^{3})}.$$
\end{proof}

The following interpolation has been given in~\cite{H-4}.
\begin{lemma}(~\cite{H-4})\label{interpolation1}
    Let $k,l\in\mathbb R$ and $\delta>0$, then there exists a constant $C(k,l,\delta)$ such that for any $u\in\mathcal S(\mathbb R^{3})$,
    $$\|u\|^{2}_{H^{k}_{l}(\mathbb R^{3})}\le C(k,l,\delta)\|u\|_{H^{k+\delta}_{2l}(\mathbb R^{3})}\|u\|_{H^{k-\delta}(\mathbb R^{3})}.$$
\end{lemma}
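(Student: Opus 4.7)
The plan is to rewrite $\|u\|^2_{H^k_l}$ as a bilinear $L^2$-pairing, split the derivative order $2k$ as $(k+\delta)+(k-\delta)$, apply Cauchy-Schwarz, and then redistribute the weight $\langle v\rangle^l$ between the two factors using Lemma \ref{lemma2.2} (commutators) and Lemma \ref{interpolation} (low-order absorption).

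First, using Plancherel together with the factorization $\langle D\rangle^{2k}=\langle D\rangle^{k+\delta}\langle D\rangle^{k-\delta}$ and the self-adjointness of each $\langle D\rangle^{s}$, I would write
$$\|u\|^2_{H^k_l}=\bigl\langle\langle D\rangle^{k+\delta}(\langle v\rangle^l u),\,\langle D\rangle^{k-\delta}(\langle v\rangle^l u)\bigr\rangle_{L^2}.$$
A direct Cauchy-Schwarz would give only the symmetric bound $\|u\|^2_{H^k_l}\le\|u\|_{H^{k+\delta}_l}\|u\|_{H^{k-\delta}_l}$, so the real content of the lemma is to transfer one copy of $\langle v\rangle^l$ off the $(k-\delta)$-factor onto the $(k+\delta)$-factor.

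I would carry out this transfer by commuting $\langle v\rangle^l$ past $\langle D\rangle^{k\pm\delta}$ inside both arguments,
$$\langle D\rangle^{k\pm\delta}(\langle v\rangle^l u)=\langle v\rangle^l\langle D\rangle^{k\pm\delta}u+[\langle D\rangle^{k\pm\delta},\langle v\rangle^l]u,$$
and then using the self-adjointness of the multiplication operator $\langle v\rangle^l$ to collect the two weight factors on one side of the pairing. The resulting principal term $\bigl\langle\langle D\rangle^{k-\delta}u,\,\langle v\rangle^{2l}\langle D\rangle^{k+\delta}u\bigr\rangle_{L^2}$ is then estimated by Cauchy-Schwarz as $\|u\|_{H^{k-\delta}}\cdot\|\langle v\rangle^{2l}\langle D\rangle^{k+\delta}u\|_{L^2}$, and a final commutation $\langle v\rangle^{2l}\langle D\rangle^{k+\delta}=\langle D\rangle^{k+\delta}\langle v\rangle^{2l}-[\langle D\rangle^{k+\delta},\langle v\rangle^{2l}]$ turns the second factor into $\|u\|_{H^{k+\delta}_{2l}}$ up to one more commutator.

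Finally, I would dispose of all commutator remainders. Each has the form $[\langle D\rangle^{k\pm\delta},\langle v\rangle^m]u$ with $m\in\{l,2l\}$, so Lemma \ref{lemma2.2} bounds it in $L^2$ by a constant times $\|u\|_{H^{k\pm\delta-1}_m}$, strictly lower in differential order than $\|u\|_{H^{k+\delta}_{2l}}$. Lemma \ref{interpolation} then supplies, for any $\epsilon>0$, an estimate of the shape $\epsilon\|u\|_{H^{k+\delta}_{2l}}+C_\epsilon\|u\|_{L^2}$; combined with the trivial embedding $\|u\|_{L^2}\lesssim\|u\|_{H^{k+\delta}_{2l}}$, every remainder fits inside $C\|u\|_{H^{k+\delta}_{2l}}\|u\|_{H^{k-\delta}}$. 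The main obstacle I expect is bookkeeping: the commutator expansion spawns several error terms with varying weights and orders, and one must systematically verify that each admits a weighted Sobolev bound with weight no larger than $2l$ so that Lemma \ref{interpolation} applies; the sign cases $k\pm\delta<0$ or $l<0$ then require separate but entirely analogous handling.
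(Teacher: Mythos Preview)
The paper does not prove this lemma at all; it is simply quoted from~\cite{H-4}. So there is no ``paper's own proof'' to compare against, and your task reduces to supplying a self-contained argument.

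Your outline is the right skeleton (split $\langle D\rangle^{2k}=\langle D\rangle^{k+\delta}\langle D\rangle^{k-\delta}$, move both copies of $\langle v\rangle^{l}$ to one side, Cauchy--Schwarz), and the principal term is handled correctly. The gap is in the remainder step. When you expand both arguments via $[\langle D\rangle^{k\pm\delta},\langle v\rangle^{l}]$ and then apply Cauchy--Schwarz term by term, a typical cross term is
\[
\bigl(\langle v\rangle^{l}\langle D\rangle^{k+\delta}u,\ [\langle D\rangle^{k-\delta},\langle v\rangle^{l}]u\bigr),
\]
whose Cauchy--Schwarz bound is $\|\langle v\rangle^{l}\langle D\rangle^{k+\delta}u\|_{L^{2}}\cdot\|u\|_{H^{k-\delta-1}_{l}}$. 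Neither factor is yet of the required form: the first carries weight $l$, not $2l$, and the second carries weight $l$ where you need weight $0$. Lemma~\ref{interpolation} (which, as stated, requires both indices positive) together with the embedding $\|u\|_{L^{2}}\lesssim\|u\|_{H^{k+\delta}_{2l}}$ will at best turn the second factor into $C\|u\|_{H^{k+\delta}_{2l}}$, so the product becomes $\|u\|_{H^{k+\delta}_{l}}\cdot\|u\|_{H^{k+\delta}_{2l}}$---you have lost the $\|u\|_{H^{k-\delta}}$ factor entirely and are no longer proving the stated product inequality. The same defect recurs for each cross term, and it is not ``just bookkeeping''.

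The clean fix is to avoid the symmetric split and instead push all weight to one side \emph{before} invoking Cauchy--Schwarz. Concretely, write
\[
\|u\|_{H^{k}_{l}}^{2}=\bigl(\langle v\rangle^{l}\langle D\rangle^{2k}\langle v\rangle^{l}u,\ u\bigr)
=\bigl(\langle D\rangle^{-(k-\delta)}\langle v\rangle^{l}\langle D\rangle^{2k}\langle v\rangle^{l}u,\ \langle D\rangle^{k-\delta}u\bigr).
\]
The operator $T=\langle D\rangle^{-(k-\delta)}\langle v\rangle^{l}\langle D\rangle^{2k}\langle v\rangle^{l}$ has principal symbol $\langle v\rangle^{2l}\langle\xi\rangle^{k+\delta}$; the full asymptotic expansion (in the spirit of your own proof of Lemma~\ref{lemma2.2}) shows $T-\langle D\rangle^{k+\delta}\langle v\rangle^{2l}\in\Psi^{k+\delta-1}_{1,0}$ with symbol bounded by $\langle v\rangle^{2l}\langle\xi\rangle^{k+\delta-1}$. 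Hence $\|Tu\|_{L^{2}}\lesssim\|u\|_{H^{k+\delta}_{2l}}$ directly, and a single Cauchy--Schwarz gives the lemma. No appeal to Lemma~\ref{interpolation} is needed, and the argument works for all $k,l\in\mathbb{R}$ without the sign case-splits you flagged. Note also that Lemma~\ref{lemma2.2} as stated assumes $r>0$, so your invocation with $r=k-\delta$ already fails when $k-\delta\le 0$; the asymptotic-expansion route above bypasses this restriction as well.
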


\section{Energy Estimates}

In this section, we study the energy estimates of the solution to the Cauchy problem \eqref{1-2}.

\begin{lemma}\label{lemma3.1}
    For $0<s<1$, $\gamma+2s>0$ and $T>0$. Let $u$ is the smooth solution of the Cauchy problem \eqref{1-2}. Assume $u_0\in L^2(\mathbb R^3)$ and $f$ satisfies \eqref{f}. Then there exists a constant $B_{0}>0$, depends on $s, \gamma$ and $T$, such that for any $t\in]0,T]$,
    \begin{equation}\label{3-1}
    \begin{split}
        \left\|u(t)\right\|^2_{L^2(\mathbb R^3)}+\int_0^t\left\|u(\tau)\right\|^2_{H^{s}_{\gamma/2}}d\tau+\int_{0}^{t}\left\|u(\tau)\right\|^{2}_{2,\gamma/2+s}d\tau\le B_{0}.
    \end{split}
    \end{equation}
\end{lemma}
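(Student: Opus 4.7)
The plan is to run a standard $L^{2}$ energy estimate, testing equation \eqref{1-2} against $u$ and extracting the coercive norms appearing on the left-hand side of \eqref{3-1} with the aid of the commutator bound of Lemma \ref{lemma2.2}. First I would multiply the equation by $u$ and integrate in $v$. The time term gives $\tfrac{1}{2}\tfrac{d}{dt}\|u\|^{2}_{L^{2}}$, the multiplicative weight $\langle v\rangle^{\gamma+2s}u$ contributes exactly $\|u\|^{2}_{2,\gamma/2+s}$ after pairing with $u$, and the source term is treated by Young's inequality, $|\int fu\,dv|\le \tfrac{1}{2}\|f\|^{2}_{L^{2}}+\tfrac{1}{2}\|u\|^{2}_{L^{2}}$, where hypothesis \eqref{f} with $k=0$ gives $\|f\|_{L^{2}}\le A$. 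The core is to extract $\|u\|^{2}_{H^{s}_{\gamma/2}}$ from the pseudo-differential term; I would symmetrize by writing $\langle v\rangle^{\gamma}=\langle v\rangle^{\gamma/2}\langle v\rangle^{\gamma/2}$ and commuting one weight through $(1-\Delta)^{s}$, obtaining
\[
\int_{\mathbb R^{3}}\langle v\rangle^{\gamma}(1-\Delta)^{s}u\cdot u\,dv
= \|u\|^{2}_{H^{s}_{\gamma/2}}-\int_{\mathbb R^{3}}\langle v\rangle^{\gamma/2}u\cdot \bigl[(1-\Delta)^{s},\langle v\rangle^{\gamma/2}\bigr]u\,dv.
\]

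Next I would estimate the commutator remainder via Lemma \ref{lemma2.2} with $r=2s$ and $m=\gamma/2$, which gives $\|[(1-\Delta)^{s},\langle\cdot\rangle^{\gamma/2}]u\|_{L^{2}}\lesssim \|u\|_{H^{2s-1}_{\gamma/2}}$, so that by Cauchy--Schwarz the remainder is controlled by $C\|u\|_{2,\gamma/2}\|u\|_{H^{2s-1}_{\gamma/2}}$. When $0<s\le 1/2$ this reduces to $\lesssim \|u\|^{2}_{2,\gamma/2}$ since $2s-1\le 0$; when $1/2<s<1$, the embedding $H^{s}_{\gamma/2}\hookrightarrow H^{2s-1}_{\gamma/2}$ together with Young's inequality splits the product into $\epsilon\|u\|^{2}_{H^{s}_{\gamma/2}}+C_{\epsilon}\|u\|^{2}_{2,\gamma/2}$. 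In either regime there remains the zero-order weighted piece $\|u\|^{2}_{2,\gamma/2}$: since $\gamma+2s>0$ but $\gamma$ itself can be positive, I would split $\mathbb R^{3}$ into $\{|v|\le R\}$ and $\{|v|>R\}$ to get, for $R\gg 1$,
\[
\|u\|^{2}_{2,\gamma/2}\le C(R,\gamma)\|u\|^{2}_{L^{2}}+R^{-2s}\|u\|^{2}_{2,\gamma/2+s},
\]
and choose $R$ so large that $R^{-2s}$ beats any prescribed $\epsilon$.

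Collecting these estimates and fixing $\epsilon$ small enough, the excess coercive contributions are absorbed into the left-hand side, yielding a differential inequality of the form
\[
\tfrac{d}{dt}\|u\|^{2}_{L^{2}}+c\bigl(\|u\|^{2}_{H^{s}_{\gamma/2}}+\|u\|^{2}_{2,\gamma/2+s}\bigr)\le C\|u\|^{2}_{L^{2}}+A^{2},
\]
with $c>0$ and $C$ depending on $s,\gamma,T$. Gronwall's inequality then yields $\|u(t)\|^{2}_{L^{2}}\le e^{CT}(\|u_{0}\|^{2}_{L^{2}}+A^{2}T)$ on $[0,T]$, and integrating the differential inequality in time produces the remaining two integrals in \eqref{3-1}, all bounded by a constant $B_{0}$ depending on $s,\gamma,T$ (and on $A$, $\|u_{0}\|_{L^{2}}$).

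The principal obstacle is the regime $1/2<s<1$: there the commutator is no longer of order zero in the velocity derivative, so one is forced to spend part of the coercive $H^{s}_{\gamma/2}$-norm to control its $H^{2s-1}_{\gamma/2}$-contribution, while simultaneously the positive-weight remainder $\|u\|^{2}_{2,\gamma/2}$ must be reabsorbed using only $\gamma+2s>0$ (not $\gamma>0$). The $R$-splitting above is the one place where the structural assumption $\gamma+2s>0$ is genuinely used; dropping it, the energy inequality would fail to close. Should a sharper absorption be needed, Lemma \ref{interpolation} provides a ready substitute for the step $\|u\|_{H^{2s-1}_{\gamma/2}}\le \|u\|_{H^{s}_{\gamma/2}}$, but the direct Sobolev embedding already suffices here.
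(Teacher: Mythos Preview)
Your proposal is correct and follows essentially the same energy-estimate strategy as the paper: pair the equation with $u$, extract $\|u\|^{2}_{H^{s}_{\gamma/2}}$ modulo the commutator controlled by Lemma~\ref{lemma2.2}, absorb the lower-order remainders, and close with Gronwall. The only technical variation is in the absorption step: the paper invokes the G{\r a}rding inequality together with the interpolation Lemmas~\ref{interpolation}--\ref{interpolation1} to bound $\|u\|^{2}_{2,\gamma/2}$ (respectively $\|u\|^{2}_{H^{2s-1}_{\gamma/2}}$) by $\epsilon\|u\|^{2}_{H^{s}_{\gamma/2}}+C_{\epsilon}\|u\|^{2}_{L^{2}}$, whereas your direct $R$-splitting absorbs the weighted piece into $\|u\|^{2}_{2,\gamma/2+s}$ instead---a slightly more elementary but equivalent closure.
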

\begin{proof}
     Since $u$ is the solution of the Cauchy problem \eqref{1-2}, we have
     \begin{equation}\label{3-1-1}
         \frac12\frac{d}{dt}\left\|u(t)\right\|^{2}_{L^{2}(\mathbb R^3)}+\left(\langle v\rangle^{\gamma}(1-\Delta)^{s}u, u\right)+\left\|\langle\cdot\rangle^{\gamma/2+s}u(t)\right\|^{2}_{L^{2}(\mathbb R^3)}=(f, u).
     \end{equation}
     Noting that
     \begin{equation*}
        \begin{split}
             &\left(\langle v\rangle^{\gamma}(1-\Delta)^{s}u, u\right)\\
             &=\left((1-\Delta)^{s}\left(\langle v\rangle^{\gamma/2}u\right), \langle v\rangle^{\gamma/2}u\right)+\left(\left[\langle v\rangle^{\gamma/2}, (1-\Delta)^{s}\right]u,\langle v\rangle^{\gamma/2}u\right),
        \end{split}
    \end{equation*}
     since $(1-\Delta)^{s}\in\Psi_{1,0}^{2s}$, from using the G${\rm\mathring{a}}$rding inequality we can get that there exists a constant $C_{0}>0$,
     $$\left((1-\Delta)^{s}\left(\langle v\rangle^{\gamma/2}u\right), \langle v\rangle^{\gamma/2}u\right)\ge\frac12\left\|u(t)\right\|^{2}_{H^{s}_{\gamma/2}(\mathbb R^3)}-C_{0}\left\|u(t)\right\|^{2}_{2, \gamma/2}.$$
     Firstly, for $0<s\le1/2$, using Cauchy-Schwarz inequality and \eqref{0-s-1/2} we get
     \begin{equation*}
        \begin{split}
             \left|\left(\left[\langle v\rangle^{\gamma/2}, (1-\Delta)^{s}\right]u,\langle v\rangle^{\gamma/2}u\right)\right|\le C_{2}\left\|u(t)\right\|^{2}_{2,\gamma/2},
        \end{split}
    \end{equation*}
    substituting it into \eqref{3-1-1}, since $\gamma+2s>0$, from Cauchy-Schwarz inequality, 
     \begin{equation}\label{3-1-2}
     \begin{split}
         &\frac{d}{dt}\left\|u(t)\right\|^{2}_{L^{2}(\mathbb R^3)}+\left\|u(t)\right\|^{2}_{H^{s}_{\gamma/2}(\mathbb R^3)}+\left\|u(t)\right\|^{2}_{2,\gamma/2+s}\\
         &\le\tilde C_{0}\left\|u(t)\right\|^{2}_{2,\gamma/2}+\left\|f(t)\right\|^{2}_{L^{2}(\mathbb R^3)},
     \end{split}
     \end{equation}
     with $\tilde C_{0}$ depends on $C_{0}$ and $C_{2}$. For $\gamma>0$, since $s>0$,  we have 
     $$\tilde C_{0}\left\|u(t)\right\|^{2}_{2,\gamma/2}\le\tilde C_{0}\left\|u(t)\right\|^{2}_{H^{s/2}_{\gamma/2}(\mathbb R^3)},$$ 
     then from Lemma \ref{interpolation} and taking $\epsilon=\frac{1}{2\sqrt{\tilde C_{0}}}$, we obtain that 
     \begin{equation*}
     \begin{split}
         &\tilde C_{0}\left\|u(t)\right\|^{2}_{2,\gamma/2}\le\tilde C_{0}\left\|u(t)\right\|^{2}_{H^{s/2}_{\gamma/2}(\mathbb R^3)}\le\frac12\left\|u(t)\right\|^{2}_{H^{s}_{\gamma/2}(\mathbb R^3)}+\tilde C_{2}\left\|u(t)\right\|^{2}_{L^{2}(\mathbb R^3)},
     \end{split}    
     \end{equation*}
     and for $-2s<\gamma\le0$, by using Lemma \ref{interpolation1}, it follows that $\tilde C_{0}\left\|u(t)\right\|^{2}_{2,\gamma/2}$ also can be bounded by
      \begin{equation*}
         \frac12\left\|u(t)\right\|^{2}_{H^{s}_{\gamma/2}(\mathbb R^3)}+\tilde C_{2}\left\|u(t)\right\|^{2}_{L^{2}(\mathbb R^3)},
     \end{equation*}
    here $\tilde C_{2}$ depends on $C_{0}$ and $C_{2}$. Plugging these back into \eqref{3-1-2}, we have for all $\gamma>-2s$ and $0<s\le1/2$,
    \begin{equation*}
     \begin{split}
         &\frac{d}{dt}\left\|u(t)\right\|^{2}_{L^{2}(\mathbb R^3)}+\left\|u(t)\right\|^{2}_{H^{s}_{\gamma/2}(\mathbb R^3)}+\left\|u(t)\right\|^{2}_{2,\gamma/2+s}\le2\tilde C_{2}\left\|u(t)\right\|^{2}_{L^{2}(\mathbb R^3)}+2\left\|f(t)\right\|^{2}_{L^{2}(\mathbb R^3)},
     \end{split}
     \end{equation*}

    Next, consider the case of $1/2<s<1$, applying \eqref{1/2-s-1} and Cauchy-Schwarz inequality, we have
     $$\left|\left(\left[\langle v\rangle^{\gamma/2}, (1-\Delta)^{s}\right]u,\langle v\rangle^{\gamma/2}u\right)\right|\le C_{3}\left\|u(t)\right\|^{2}_{H^{2s-1}_{\gamma/2}(\mathbb R^3)},$$
     substituting it into \eqref{3-1-1}, by using the fact $\gamma+2s>0$  and Cauchy-Schwarz inequality, one has for all $1/2<s<1$
     \begin{equation}\label{3-1-4}
     \begin{split}
         &\frac{d}{dt}\left\|u(t)\right\|^{2}_{L^{2}(\mathbb R^3)}+\left\|u(t)\right\|^{2}_{H^{s}_{\gamma/2}(\mathbb R^3)}+\left\|u(t)\right\|^{2}_{2,\gamma/2+s}\\
         &\le\tilde C'_{0}\left\|u(t)\right\|^{2}_{H^{2s-1}_{\gamma/2}(\mathbb R^3)}+\left\|f(t)\right\|^{2}_{L^{2}(\mathbb R^3)},
     \end{split}
     \end{equation}
     with $\tilde C'_{0}$ depends on $C_{0}$ and $C_{3}$. Since $0<s<1$, it follows that $0<2s-1<s$. Then as the argument in the case $0<s\le1/2$, if $\gamma>0$, we use Lemma \ref{interpolation}, on the other hand, for $-2s<\gamma\le0$, we use Lemma \ref{interpolation1}. Thus for all $\gamma>-2s$, we can get that $\tilde C'_{0}\left\|u(t)\right\|^{2}_{H^{2s-1}_{\gamma/2}(\mathbb R^3)}$ is bounded by
     \begin{equation*}
         \frac12\left\|u(t)\right\|^{2}_{H^{s}_{\gamma/2}(\mathbb R^3)}+\tilde C_{3}\left\|u(t)\right\|^{2}_{L^{2}(\mathbb R^3)},
     \end{equation*}
     with $\tilde C_{3}$ depends on $C_{0}$ and $C_{3}$. Plugging it back into \eqref{3-1-4}, we can get
     \begin{equation*}
     \begin{split}
         &\frac{d}{dt}\|u(t)\|^{2}_{L^{2}(\mathbb R^3)}+\|u(t)\|^{2}_{H^{s}_{\gamma/2}(\mathbb R^3)}+\|u(t)\|^{2}_{2,\gamma/2+s}\le2\tilde C_{3}\|u(t)\|^{2}_{L^{2}(\mathbb R^3)}+2\|f(t)\|^{2}_{L^{2}(\mathbb R^3)}.
     \end{split}
     \end{equation*}
     Thus, for all $\gamma>-2s$ and $0<s<1$, we have
     \begin{equation*}
     \begin{split}
         &\frac{d}{dt}\left\|u(t)\right\|^{2}_{L^{2}(\mathbb R^3)}+\left\|u(t)\right\|^{2}_{H^{s}_{\gamma/2}(\mathbb R^3)}+\left\|u(t)\right\|^{2}_{2,\gamma/2+s}\\
         &\le\max\left\{2\tilde C_{2}, 2\tilde C_{3}\right\}\left\|u(t)\right\|^{2}_{L^{2}(\mathbb R^3)}+2\left\|f(t)\right\|^{2}_{L^{2}(\mathbb R^3)},
     \end{split}
     \end{equation*}
     then applying Gronwall inequality and \eqref{f}, taking
     $$B_{0}=2A^{2}T^{2}e^{T\max\left\{2\tilde C_{2}, 2\tilde C_{3}\right\}}+2A^{2}T,$$
     it follows that for any $t\in]0,T]$,
     \begin{equation*}
     \begin{split}
         &\left\|u(t)\right\|^{2}_{L^{2}(\mathbb R^{3})}+\int_{0}^{t}\left\|u(\tau)\right\|^{2}_{H^{s}_{\gamma/2}(\mathbb R^{3})}d\tau+\int_{0}^{t}\left\|u(\tau)\right\|^{2}_{2,\gamma/2+s}d\tau\le B_{0},
     \end{split}
     \end{equation*}
     with $B_{0}>0$, depends on $s$, $\gamma$ and $T$.
\end{proof}

\section{Proof of Theorem \ref{thm1} and Theorem \ref{thm2}}

In this section, we will show the Gelfand-Shilov regularity to the solution of the Cauchy problem \eqref{1-2}. We construct the following estimates, which imply Theorem \ref{thm1} and Theorem \ref{thm2}. 

\begin{prop}\label{prop 4.1}
    For $0<s<1$, $T>0$ and $\gamma+2s>0$. Let $u$ be the smooth solution of Cauchy problem \eqref{1-2}, assume $u_0\in L^2(\mathbb R^3)$ and $f$ satisfies \eqref{f}. Then there exists a constant $B_{1}>0$, such that for any $t\in]0,T]$ and $k\in\mathbb N$,
    \begin{equation}\label{4-1}
    \begin{split}
        &\left\|\left(t\langle D\rangle^{2\tilde s}\right)^{k}u\right\|^2_{(L^{\infty}]0,T];L^2)}+\int_0^T\left\|\left(t\langle D\rangle^{2\tilde s}\right)^ku\right\|^2_{H^s_{\gamma/2}}dt\\
        &\quad+\int_0^T\left\|\left(t\langle D\rangle^{2\tilde s}\right)^ku\right\|^2_{2,\gamma/2+s}dt\le\left(B_{1}^{k+1}k!\right)^2,
    \end{split}
    \end{equation}
    with $B_{1}$ depends on $\gamma$, $s$ and $T$.
\end{prop}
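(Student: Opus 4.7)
The plan is to argue by induction on $k$. The base case $k=0$ is exactly Lemma~\ref{lemma3.1}, so we may take $B_1$ large enough that $B_1^2\ge B_0$. For the inductive step, assume \eqref{4-1} for every $j\le k-1$, and set $U_k:=(t\langle D\rangle^{2\tilde s})^k u = t^k\langle D\rangle^{2\tilde s k}u$. Since $\langle D\rangle^{2\tilde s k}$ is a Fourier multiplier it commutes with $(1-\Delta)^s$, and the identity $t^{k-1}\langle D\rangle^{2\tilde s k}u=\langle D\rangle^{2\tilde s}U_{k-1}$ combined with \eqref{1-2} gives
\begin{equation*}
\partial_t U_k + \langle v\rangle^\gamma\bigl((1-\Delta)^s+\langle v\rangle^{2s}\bigr)U_k = k\,\langle D\rangle^{2\tilde s}U_{k-1} + t^k\langle D\rangle^{2\tilde s k}f - R_k,
\end{equation*}
where $R_k := t^k[\langle D\rangle^{2\tilde s k},\langle v\rangle^\gamma](1-\Delta)^s u + t^k[\langle D\rangle^{2\tilde s k},\langle v\rangle^{\gamma+2s}]u$. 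Note $U_k|_{t=0}=0$ for $k\ge 1$, which will be essential for integrating in time.

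Taking the $L^2$ inner product with $U_k$ and running the elliptic analysis of Lemma~\ref{lemma3.1} verbatim (G\aa{}rding, the commutator bounds \eqref{0-s-1/2}/\eqref{1/2-s-1}, and the weighted interpolations of Lemmas~\ref{interpolation}/\ref{interpolation1}) would yield
\begin{equation*}
\tfrac{d}{dt}\|U_k\|_{L^2}^2 + \|U_k\|_{H^s_{\gamma/2}}^2 + \|U_k\|_{2,\gamma/2+s}^2 \lesssim \|U_k\|_{L^2}^2 + k\,|(\langle D\rangle^{2\tilde s}U_{k-1},U_k)| + |(t^k\langle D\rangle^{2\tilde s k}f, U_k)| + |(R_k,U_k)|,
\end{equation*}
with universal constant independent of $k$. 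I would then bound the three new contributions as follows. The source term uses $\|\langle D\rangle^{2\tilde s k}f\|_{L^2}\le\|\langle D\rangle^k f\|_{L^2}\le A^{k+1}k!$, which is valid because $2\tilde s\le 1$, combined with \eqref{f} and Young's inequality. The transport-type term $k(\langle D\rangle^{2\tilde s}U_{k-1},U_k)$ is handled by Cauchy-Schwarz together with the interpolation bound $\|\langle D\rangle^{2\tilde s}U_{k-1}\|_{L^2}\le\varepsilon\|U_{k-1}\|_{H^s_{\gamma/2}}+C_\varepsilon\|U_{k-1}\|_{L^2}$ deduced from Lemmas~\ref{interpolation}/\ref{interpolation1}; the induction hypothesis for $U_{k-1}$ then supplies a factor $k\cdot B_1^k(k-1)!=B_1^k k!$ of the correct Gevrey size. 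Finally, the commutators $R_k$ are controlled via Lemma~\ref{lemma2.2} applied with $r=2\tilde s k$ and $m\in\{\gamma,\gamma+2s\}$, producing bounds in $\|(1-\Delta)^s u\|_{H^{2\tilde s k-1}_\gamma}$ and $\|u\|_{H^{2\tilde s k-1}_{\gamma+2s}}$; the elementary inequality $2\tilde s k-1\le 2\tilde s(k-1)$ (a consequence of $2\tilde s\le 1$) reduces these to norms at the level $k-1$ that are covered by the induction hypothesis up to Lemmas~\ref{interpolation}/\ref{interpolation1}.

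After integrating in time on $[0,t]$ for $t\in]0,T]$, applying Cauchy-Schwarz in $\tau$ to pair the right-hand side against the integrated norms of $U_{k-1}$ already controlled by induction, and absorbing a small portion of the dissipation back into the left-hand side, Gronwall's inequality concludes
\begin{equation*}
\|U_k\|_{L^\infty([0,T];L^2)}^2+\int_0^T\bigl(\|U_k\|_{H^s_{\gamma/2}}^2+\|U_k\|_{2,\gamma/2+s}^2\bigr)dt \le D(T,A)\,\bigl(B_1^k k!\bigr)^2,
\end{equation*}
which is dominated by $(B_1^{k+1}k!)^2$ once $B_1$ is chosen large enough relative to $A$, $T$, and the universal constants from the Gronwall loop.

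The step I expect to be the main obstacle is the commutator estimate for $R_k$: the constant $C_1$ in Lemma~\ref{lemma2.2} is stated to depend on $r=2\tilde s k$, and to close the induction at factorial rate one must either inspect its proof to verify that the dependence on $r$ is at worst polynomial in $k$ (which it should be, since the symbolic semi-norms of $\langle\xi\rangle^{2\tilde s k}$ grow polynomially in the order), or else iterate a Leibniz-type expansion to split $R_k$ into $k$ unit-order commutators and use the factorial margin from the induction hypothesis to absorb the extra factors. The careful matching of the weighted Sobolev indices produced by the commutator (in particular $\|u\|_{H^{2\tilde s k-1}_{\gamma+2s}}$) with the integrated norms of $U_{k-1}$ that the induction provides is the delicate bookkeeping that drives the proof.
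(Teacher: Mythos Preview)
Your overall strategy coincides with the paper's: induction on $k$, the base case from Lemma~\ref{lemma3.1}, writing the evolution equation for $U_k$, and flagging the weight commutator as the delicate point. The paper indeed resolves the commutator by the Leibniz-type iteration you mention at the end, obtaining
\[
\Bigl\|\langle\cdot\rangle^{-(\gamma/2+s)}\bigl[(t\langle D\rangle^{2\tilde s})^k,\langle\cdot\rangle^{\gamma+2s}\bigr]u\Bigr\|_{L^2}\le\sum_{j=0}^{k-1}(C_4t)^{k-j}C_k^{j}\bigl\|(t\langle D\rangle^{2\tilde s})^j u\bigr\|_{2,\gamma/2+s},
\]
with an analogous expansion for the commutator with $\langle v\rangle^{\gamma}$. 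Notice that the iteration, besides avoiding the $k$-dependent constant, is what keeps the weight at exactly $\gamma/2+s$: your first option of applying Lemma~\ref{lemma2.2} once with $r=2\tilde s k$ would produce $\|u\|_{H^{2\tilde s k-1}_{\gamma+2s}}$, whose weight $\gamma+2s=2(\gamma/2+s)$ is strictly larger than anything the induction supplies. So the Leibniz route is not merely a cure for the constant but is forced by the bookkeeping you correctly flag as delicate.

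There is, however, a concrete error in your treatment of the transport-type term. The inequality you invoke,
\[
\|\langle D\rangle^{2\tilde s}U_{k-1}\|_{L^2}\le\varepsilon\|U_{k-1}\|_{H^s_{\gamma/2}}+C_\varepsilon\|U_{k-1}\|_{L^2},
\]
cannot be deduced from Lemma~\ref{interpolation} or Lemma~\ref{interpolation1}: since $\tilde s=\min\{1/2,s\}$ one has $2\tilde s>s$ for every $0<s<1$, so the left side carries \emph{more} derivatives than the first term on the right, and no interpolation between $H^s_{\gamma/2}$ and $L^2$ can recover it. The paper's fix is to split the $2\tilde s$ derivatives symmetrically,
\[
2kt^{2k-1}\bigl\|\langle D\rangle^{2\tilde s k}u\bigr\|_{L^2}^2
=2k\bigl(\langle D\rangle^{\tilde s}U_{k-1},\,\langle D\rangle^{\tilde s}U_{k}\bigr),
\]
and then use $\tilde s\le s$ to bound each factor $\|\langle D\rangle^{\tilde s}U_j\|_{L^2}$ by $\|U_j\|_{H^s_{\gamma/2}}$; this yields $\tfrac18\|U_k\|_{H^s_{\gamma/2}}^2+Ck^2\|U_{k-1}\|_{H^s_{\gamma/2}}^2$, after which the $U_k$ piece is absorbed into the dissipation and the $U_{k-1}$ piece is handled by induction. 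Once you make this symmetric split and commit to the Leibniz expansion for $R_k$, the remainder of your outline matches the paper.
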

\begin{proof}
    Since $u\in C^{\infty}(\mathbb R^{3})$ is the solution of Cauchy problem \eqref{1-2}, we have
    \begin{equation*}
    \begin{split}
        &\frac{d}{dt}\left(\left(t\langle D\rangle^{2\tilde s}\right)^{k}u\right)+\langle v\rangle^{\gamma}\left((1-\Delta)^{s}+\langle v\rangle^{2s}\right)\left(t\langle D\rangle^{2\tilde s}\right)^{k}u\\
        &=kt^{k-1}\langle D\rangle^{2\tilde sk}u+\left(t\langle D\rangle^{2\tilde s}\right)^{k}f+\left[\langle v\rangle^{\gamma}\left((1-\Delta)^{s}+\langle v\rangle^{2s}\right), \left(t\langle D\rangle^{2\tilde s}\right)^{k}\right]u,
    \end{split}
    \end{equation*}
    taking the scalar product with respect to $\left(t\langle D\rangle^{2\tilde s}\right)^{k}u$, then by using G${\rm\mathring{a}}$rding inequality and Cauchy-Schwarz inequality, one has  
    \begin{equation}\label{dt}
    \begin{split}
        &\frac{d}{dt}\left\|\left(t\langle D\rangle^{2\tilde s}\right)^{k}u\right\|^{2}_{L^{2}}+\left\|\left(t\langle D\rangle^{2\tilde s}\right)^{k}u\right\|^{2}_{H^{s}_{\gamma/2}}+\left\|\left(t\langle D\rangle^{2\tilde s}\right)^{k}u\right\|^{2}_{2,\gamma/2+s}\\
        &\le 2kt^{2k-1}\left\|\langle D\rangle^{2\tilde sk}u\right\|^{2}_{L^{2}}+2C_{0}\left\|\left(t\langle D\rangle^{2\tilde s}\right)^{k}u\right\|^{2}_{2,\gamma/2}+\left\|\left(t\langle D\rangle^{2\tilde s}\right)^{k}f\right\|^{2}_{L^{2}}\\
        &\quad+2\left(\left[(1-\Delta)^{s}, \langle v\rangle^{\gamma/2}\right]\left(t\langle D\rangle^{2\tilde s}\right)^{k}u, \langle v\rangle^{\gamma/2}\left(t\langle D\rangle^{2\tilde s}\right)^{k}u\right)\\
        &\quad+2\left(\left[\langle v\rangle^{\gamma+2s}, \left(t\langle D\rangle^{2\tilde s}\right)^{k}\right]u, \left(t\langle D\rangle^{2\tilde s}\right)^{k}u\right)\\
        &\quad+2\left(\left[\langle v\rangle^{\gamma}(1-\Delta)^{s}, (t\langle D\rangle^{2\tilde s})^{k}\right]u, \left(t\langle D\rangle^{2\tilde s}\right)^{k}u\right)\\
        &=2kt^{2k-1}\left\|\langle D\rangle^{2\tilde sk}u\right\|^{2}_{L^{2}}+2C_{0}\left\|\left(t\langle D\rangle^{2\tilde s}\right)^{k}u\right\|^{2}_{2,\gamma/2}+\left\|\left(t\langle D\rangle^{2\tilde s}\right)^{k}f\right\|^{2}_{L^{2}}\\
        &\quad+I_{1}+I_{2}+I_{3}.
    \end{split}
    \end{equation}

    We show \eqref{4-1} is true by induction on the index $k$. For $k=0$, it is enough to take in \eqref{3-1}. Assume $k\ge1$ and \eqref{4-1} holds true for $0\le m\le k-1$,
    \begin{equation}\label{m}
    \begin{split}
        &\left\|\left(t\langle D\rangle^{2\tilde s}\right)^{m}u\right\|^2_{(L^{\infty}]0,T];L^2)}+\int_0^t\left\|\left(t\langle D\rangle^{2\tilde s}\right)^mu\right\|^2_{H^s_{\gamma/2}}d\tau\\
        &\quad+\int_0^T\left\|\left(t\langle D\rangle^{2\tilde s}\right)^mu\right\|^2_{2, \gamma/2+s}dt\le\left(B_{1}^{m+1}m!\right)^2.
    \end{split}
    \end{equation}
    Now, we prove that \eqref{m} is true for $m=k$.

    For $I_{1}$, we restrict to $0<s\le1/2$ first. By applying \eqref{0-s-1/2} and Cauchy-Schwarz inequality gets
     $$|I_{1}|\le 2C_{2}\left\|\left(t\langle D\rangle^{2\tilde s}\right)^{k}u\right\|^{2}_{2,\gamma/2}.$$
    Then we consider the case $1/2<s<1$. Using \eqref{1/2-s-1} and Cauchy-Schwarz inequality, 
   $$|I_{1}|\le2C_{3}\left\|\left(t\langle D\rangle^{2\tilde s}\right)^{k}u\right\|_{H^{2s-1}_{\gamma/2}}\left\|\left(t\langle D\rangle^{2\tilde s}\right)^{k}\right\|_{2,\gamma/2}\le 2C_{3}\left\|\left(t\langle D\rangle^{2\tilde s}\right)^{k}u\right\|^{2}_{H^{2s-1}_{\gamma/2}}.$$
    For the above two cases, as the argument in Lemma \ref{lemma3.1}, if $\gamma>0$, we apply Lemma \ref{interpolation} and take $\epsilon=\frac{1}{4\sqrt{2C_{2}}}$, then it follows that
    \begin{equation}\label{2-gamma}
    \begin{split}
        &2C_{2}\left\|\left(t\langle D\rangle^{2\tilde s}\right)^{k}u\right\|^{2}_{2,\gamma/2}\le2C_{2}\left\|\left(t\langle D\rangle^{2\tilde s}\right)^{k}u\right\|^{2}_{H^{s/2}_{\gamma/2}}\\
        &\le\frac18\left\|\left(t\langle D\rangle^{2\tilde s}\right)^{k}u\right\|^{2}_{H^{s}_{\gamma/2}(\mathbb R^3)}+\tilde C_{2}\left\|\left(t\langle D\rangle^{2\tilde s}\right)^{k}u\right\|^{2}_{L^{2}(\mathbb R^3)}.
    \end{split}
    \end{equation}
    If $-2s<\gamma\le0$, we apply Lemma \ref{interpolation1}, then it follows that $2C_{3}\left\|\left(t\langle D\rangle^{2\tilde s}\right)^{k}u\right\|^{2}_{H^{2s-1}_{\gamma/2}}$ can be bounded by
     \begin{equation*}
         \frac18\left\|\left(t\langle D\rangle^{2\tilde s}\right)^{k}u\right\|^{2}_{H^{s}_{\gamma/2}(\mathbb R^3)}+\tilde C_{2}\left\|\left(t\langle D\rangle^{2\tilde s}\right)^{k}u\right\|^{2}_{L^{2}(\mathbb R^3)},
     \end{equation*}
    plugging it into the above inequalities, one has
    $$|I_{1}|\le\frac18\left\|\left(t\langle D\rangle^{2\tilde s}\right)^{k}u\right\|^{2}_{H^{s}_{\gamma/2}}+\tilde C_{2}\left\|\left(t\langle D\rangle^{2\tilde s}\right)^{k}u\right\|^{2}_{L^{2}},$$
    here $\tilde C_{2}$ depends on $C_{2}$ and $C_{3}$.

    For $I_{2}$, from Cauchy-Schwarz inequality, we have
    $$\left|I_{2}\right|\le2\left\|\langle \cdot\rangle^{-(\gamma/2+s)}\left[\left(t\langle D\rangle^{2\tilde s}\right)^{k}, \langle \cdot\rangle^{\gamma+2s}\right]u\right\|_{L^{2}}\left\|\left(t\langle D\rangle^{2\tilde s}\right)^{k}u\right\|_{2, \gamma/2+s},$$
    if $k=1$, then as the argument in Lemma \ref{lemma2.2},  for the commutator in $L^{2}$ on the right-hand side of the above inequality, we have
    $$\left\|\langle \cdot\rangle^{-(\gamma/2+s)}\left[t\langle D\rangle^{2\tilde s}, \langle \cdot\rangle^{\gamma+2s}\right]u\right\|_{L^{2}}\le C_{4}t\left\|u\right\|_{2, \gamma/2},$$
    with $C_{4}$ depends on $\gamma, s$, then assume that for $k-1$, 
    $$\left\|\langle \cdot\rangle^{-(\gamma/2+s)}\left[(t\langle D\rangle^{2\tilde s})^{k-1}, \langle \cdot\rangle^{\gamma+2s}\right]u\right\|_{L^{2}}\le\sum_{j=0}^{k-2}(C_{4}t)^{k-1-j}C_{k-1}^{j}\left\|(t\langle D\rangle^{2\tilde s})^{j}u\right\|_{2, \gamma/2+s}.$$
    For $k$, noting that 
    \begin{equation*}
    \begin{split}
         &\langle v\rangle^{-(\gamma/2+s)}\left[\left(t\langle D\rangle^{2\tilde s}\right)^{k}, \langle v\rangle^{\gamma+2s}\right]u=t\langle v\rangle^{-(\gamma/2+s)}\left[\left(t\langle D\rangle^{2\tilde s}\right)^{k-1}, \langle v\rangle^{\gamma+2s}\right]\langle D\rangle^{2\tilde s}u\\
         &\qquad+t\langle v\rangle^{\gamma/2+s}\left(t\langle D\rangle^{2\tilde s}\right)^{k-1}Au+t\langle v\rangle^{-(\gamma/2+s)}\left[\left(t\langle D\rangle^{2\tilde s}\right)^{k-1}, \langle v\rangle^{\gamma+2s}\right]Au,
    \end{split}
    \end{equation*}
    here $A(v, D_{v})=\langle v\rangle^{-(\gamma+2s)}[\langle D\rangle^{2\tilde s}, \langle v\rangle^{\gamma+2s}]$, as the argument in Lemma \ref{lemma2.2}, one has $A$ is a pseudo-differential operator of order $0$, hence 
    $$\left\|\langle \cdot\rangle^{\frac\gamma2+s}\left(t\langle D\rangle^{2\tilde s}\right)^{k-1}A\left(t\langle D\rangle^{2\tilde s}\right)^{-(k-1)}\left(t\langle D\rangle^{2\tilde s}\right)^{k-1}u\right\|_{L^{2}}\le C_{5}\left\|\left(t\langle D\rangle^{2\tilde s}\right)^{k-1}u\right\|_{2, \frac\gamma2+s}.$$
    Using the induction hypothesis, taking $C_{4}\ge C_{5}$, it follows that  
    \begin{equation}\label{4-4-k}
        \left\|\langle \cdot\rangle^{-(\frac\gamma2+s)}\left[(t\langle D\rangle^{2\tilde s})^{k}, \langle \cdot\rangle^{\gamma+2s}\right]u\right\|_{L^{2}}\le\sum_{j=0}^{k-1}(C_{4}t)^{k-j}C_{k}^{j}\left\|(t\langle D\rangle^{2\tilde s})^{j}u\right\|_{2, \frac\gamma2+s}.
    \end{equation}    
    Therefore from the Cauchy-Schwarz inequality, we can obtain that for any $t\in]0, T]$ 
    \begin{equation*}
    \begin{split}
        \left|I_{2}\right|\le2\left(\sum_{j=0}^{k-1}(C_{4}t)^{k-j}C_{k}^{j}\left\|(t\langle D\rangle^{2\tilde s})^{j}u\right\|_{2, \gamma/2+s}\right)^{2}+\frac12\left\|\left(t\langle D\rangle^{2\tilde s}\right)^{k}u\right\|^{2}_{2, \gamma/2+s}.
    \end{split}
    \end{equation*}
    Finally, we consider $I_{3}$, noticing that
    \begin{equation}\label{I3}
    \begin{split}
         I_{3}=2\left(\langle D\rangle^{-s}\langle v\rangle^{-\gamma/2}\left[\langle v\rangle^{\gamma}, (t\langle D\rangle^{2\tilde s})^{k}\right]\langle D\rangle^{2s}u, \langle D\rangle^{s}(\langle v\rangle^{\gamma/2}\left(t\langle D\rangle^{2\tilde s}\right)^{k}u)\right).
    \end{split}
    \end{equation}
    As the arguments in \eqref{4-4-k}, we have
    \begin{equation*}
    \begin{split}
         &\left\|\langle D\rangle^{-s}\langle \cdot\rangle^{-\gamma/2}\left[\langle \cdot\rangle^{\gamma}, (t\langle D\rangle^{2\tilde s})^{k}\right]\langle D\rangle^{2s}u\right\|_{L^{2}}\\
         &\le\sum_{j=0}^{k-1}(\tilde C_{4}t)^{k-j}C_{k}^{j}\left\|\langle D\rangle^{-s}\left(\langle \cdot\rangle^{\gamma/2}(t\langle D\rangle^{2\tilde s})^{j}\langle D\rangle^{2s}u\right)\right\|_{L^{2}},
    \end{split}
    \end{equation*}
    then from Lemma 2.2 of~\cite{H-4}, it follows that 
    \begin{equation*}
    \begin{split}
         &\left\|\langle D\rangle^{-s}\langle \cdot\rangle^{-\gamma/2}\left[\langle \cdot\rangle^{\gamma}, (t\langle D\rangle^{2\tilde s})^{k}\right]\langle D\rangle^{2s}u\right\|_{L^{2}}\\
         &\le\sum_{j=0}^{k-1}(\tilde C_{4}t)^{k-j}C_{k}^{j}\left\|\langle D\rangle^{-s}\left(\langle \cdot\rangle^{\gamma/2}\langle D\rangle^{2s}(t\langle D\rangle^{2\tilde s})^{j}u\right)\right\|_{L^{2}}\\
         &\le C_{5}\sum_{j=0}^{k-1}(\tilde C_{4}t)^{k-j}C_{k}^{j}\left\|(t\langle D\rangle^{2\tilde s})^{j}u\right\|_{H^{s}_{\gamma/2}},
    \end{split}
    \end{equation*}
    with $\tilde C_{4}$ and $C_{5}$ depends on $\gamma$, $s$. Plugging it into \eqref{I3}, we can get 
    $$\left|I_{3}\right|\le8\left(C_{5}\sum_{j=0}^{k-1}(\tilde C_{4}t)^{k-j}C_{k}^{j}\left\|(t\langle D\rangle^{2\tilde s})^{j}u\right\|_{H^{s}_{\gamma/2}}\right)^{2}+\frac18\left\|(t\langle D\rangle^{2\tilde s})^{k}u\right\|^{2}_{H^{s}_{\gamma/2}}.$$
 It remains to estimate $\left\|\langle D\rangle^{2\tilde sk}u\right\|_{L^{2}}$. Since $\gamma/2>-s>-1$, from Taylor formula, it follows that
    \begin{equation*}
    \begin{split}
        &\left|\langle v\rangle^{\gamma/2}-1\right|=\left|\sum_{j=1}^{3}\left(\int_{0}^{1}\partial_{j}\langle\theta v\rangle^{\gamma/2}d\theta v_{j}\right)\right|\le\sum_{j=1}^{3}\int_{0}^{1}\left|\partial_{j}\langle\theta v\rangle^{\gamma/2}\right|d\theta |v_{j}|\\
        &\le C_{\gamma}\int_{0}^{1}\langle\theta v\rangle^{\gamma/2}d\theta\le C_{\gamma}\langle v\rangle^{\gamma/2}\max\left\{1, \int_{0}^{1}\theta^{\gamma/2}d\theta\right\}\le\tilde C_{\gamma}\langle v\rangle^{\gamma/2},
    \end{split}
    \end{equation*}
    here $\tilde C_{\gamma}$ is the constant depend on $\gamma$, then by using Lemma \ref{lemma2.2}, it follows that
    \begin{equation*}
    \begin{split}
         \left\|\langle D\rangle^{\tilde s}\langle D\rangle^{2\tilde sp}u\right\|_{L^{2}}&\le\left\|\langle\cdot\rangle^{\gamma/2}\langle D\rangle^{\tilde s}\langle D\rangle^{2\tilde sp}u\right\|_{L^{2}}+\left\|(1-\langle\cdot\rangle^{\gamma/2})\langle D\rangle^{\tilde s}\langle D\rangle^{2\tilde sp}u\right\|_{L^{2}}\\
         %&\le(\tilde C_{\gamma}+1)\left\|\langle\cdot\rangle^{\gamma/2}\langle D\rangle^{\tilde s}\langle D\rangle^{2\tilde sp}u\right\|_{L^{2}}\\
         &\le(\tilde C_{\gamma}+1)\left(\left\|\langle D\rangle^{2\tilde sp}u\right\|_{H^{\tilde s}_{\gamma/2}}+\left\|[\langle\cdot\rangle^{\gamma/2}, \langle D\rangle^{\tilde s}]\langle D\rangle^{2\tilde sp}u\right\|_{L^{2}}\right)\\
         &\le(\tilde C_{\gamma}+1)\left(\left\|\langle D\rangle^{2\tilde sp}u\right\|_{H^{\tilde s}_{\gamma/2}}+C_{1}\left\|\langle D\rangle^{2\tilde sp}u\right\|_{H^{\tilde s-1}_{\gamma/2}}\right)\\ 
         &\le(\tilde C_{\gamma}+1)(C_{1}+1)\left\|\langle D\rangle^{2\tilde sp}u\right\|_{H^{s}_{\gamma/2}}, \quad \forall p\in\mathbb N.
    \end{split}
    \end{equation*}
    Hence, using Cauchy-Schwarz inequality, we have
    \begin{equation*}
    \begin{split}
         &2kt^{2k-1}\left\|\langle D\rangle^{2\tilde sk}u\right\|^{2}_{L^{2}}=2kt^{2k-1}\left(\langle D\rangle^{\tilde s}\langle D\rangle^{2\tilde s(k-1)}u, \langle D\rangle^{\tilde s}\langle D\rangle^{2\tilde sk}u\right)\\
         &\le2k\left\|\langle D\rangle^{\tilde s}\left(t\langle D\rangle^{2\tilde s}\right)^{k-1}u\right\|_{L^{2}}\left\|\langle D\rangle^{\tilde s}\left(t\langle D\rangle^{2\tilde s}\right)^{k}u\right\|_{L^{2}}\\ 
         &\le2k(\tilde C_{\gamma}+1)^{2}(C_{1}+1)^{2}\left\|\left(t\langle D\rangle^{2\tilde s}\right)^{k-1}u\right\|_{H^{s}_{\gamma/2}}\left\|\left(t\langle D\rangle^{2\tilde s}\right)^{k}u\right\|_{H^{s}_{\gamma/2}}\\ 
         &\le\frac18\left\|\left(t\langle D\rangle^{2\tilde s}\right)^{k}u\right\|^{2}_{H^{s}_{\gamma/2}}+C_{6}k^{2}\left\|\left(t\langle D\rangle^{2\tilde s}\right)^{k-1}u\right\|^{2}_{H^{s}_{\gamma/2}},
    \end{split}
    \end{equation*}
    with $C_{6}$ depends on $\gamma, s$. Substituting these results into \eqref{dt}, one gets
    \begin{equation*}
    \begin{split}
        &\frac{d}{dt}\left\|\left(t\langle D\rangle^{2\tilde s}\right)^{k}u\right\|^{2}_{L^{2}}+\left\|\left(t\langle D\rangle^{2\tilde s}\right)^{k}u\right\|^{2}_{H^{s}_{\gamma/2}}+\left\|\left(t\langle D\rangle^{2\tilde s}\right)^{k}u\right\|^{2}_{2,\gamma/2+s}\\
        &\le\tilde C_{6}k^{2}\left\|\left(t\langle D\rangle^{2\tilde s}\right)^{k-1}u\right\|^{2}_{H^{s}_{\gamma/2}}+4\left(\sum_{j=0}^{k-1}(C_{4}t)^{k-j}C_{k}^{j}\left\|(t\langle D\rangle^{2\tilde s})^{j}u\right\|_{2, \gamma/2+s}\right)^{2}\\
        &\quad+16\left(C_{5}\sum_{j=0}^{k-1}(\tilde C_{4}t)^{k-j}C_{k}^{j}\left\|(t\langle D\rangle^{2\tilde s})^{j}u\right\|_{H^{s}_{\gamma/2}}\right)^{2}+2\left\|\left(t\langle D\rangle^{2\tilde s}\right)^{k}f\right\|^{2}_{L^{2}},
    \end{split}
    \end{equation*}
    with $\tilde C_{6}$ depends on $\gamma, s$. Integrating from~0~to $t$ gets, for all $0<t\le T$,
    \begin{equation*}
    \begin{split}
        &\left\|\left(t\langle D\rangle^{2\tilde s}\right)^{k}u\right\|^{2}_{L^{2}}+\int_{0}^{t}\left\|\left(\tau\langle D\rangle^{2\tilde s}\right)^{k}u\right\|^{2}_{H^{s}_{\gamma/2}}d\tau+\int_{0}^{t}\left\|\left(\tau\langle D\rangle^{2\tilde s}\right)^{k}u\right\|^{2}_{2, \gamma/2+s}d\tau\\
        &\le\tilde C_{6}k^{2}\int_{0}^{t}\left\|\left(\tau\langle D\rangle^{2\tilde s}\right)^{k-1}u\right\|^{2}_{H^{s}_{\gamma/2}}d\tau+2\int_{0}^{t}\left\|\left(\tau\langle D\rangle^{2\tilde s}\right)^{k}f\right\|^{2}_{L^{2}}d\tau\\
        &\quad+4\int_{0}^{t}\left(\sum_{j=0}^{k-1}(C_{4}\tau)^{k-j}C_{k}^{j}\left\|(\tau\langle D\rangle^{2\tilde s})^{j}u\right\|_{2, \gamma/2+s}\right)^{2}d\tau\\
        &\quad+16\int_{0}^{t}\left(C_{5}\sum_{j=0}^{k-1}(\tilde C_{4}\tau)^{k-j}C_{k}^{j}\left\|(\tau\langle D\rangle^{2\tilde s})^{j}u\right\|_{H^{s}_{\gamma/2}}\right)^{2}d\tau.
    \end{split}
    \end{equation*}
    By using Minkowski inequality, taking $B_{1}\ge(C_{4}T)^{2}+1$, from \eqref{m}, one has 
    \begin{equation*}
    \begin{split}
    &\int_{0}^{t}\left(\sum_{j=0}^{k-1}(C_{4}\tau)^{k-j}C_{k}^{j}\left\|(\tau\langle D\rangle^{2\tilde s})^{j}u\right\|_{2, \gamma/2+s}\right)^{2}d\tau\\
    &\le\left(\sum_{j=0}^{k-1}(C_{4}T)^{k-j}C_{k}^{j}\left(\int_{0}^{t}\left\|(\tau\langle D\rangle^{2\tilde s})^{j}u\right\|^{2}_{2, \gamma/2+s}d\tau\right)^{\frac12}\right)^{2}\\
    &\le\left(\sum_{j=0}^{k-2}\frac{(C_{4}T)^{k-j}B_{1}^{j+1}k!}{(k-j)!}+C_{4}TB_{1}^{k}k!\right)^{2}\le\left((3+C_{4}T)B_{1}^{k}k!\right)^{2},
    \end{split}
    \end{equation*}
    similarly, taking $B_{1}\ge(\tilde C_{4}T)^{2}+1$, from \eqref{m}, we have 
    $$\int_{0}^{t}\left(C_{5}\sum_{j=0}^{k-1}(\tilde C_{4}\tau)^{k-j}C_{k}^{j}\left\|(\tau\langle D\rangle^{2\tilde s})^{j}u\right\|_{H^{s}_{\gamma/2}}\right)^{2}d\tau\le\left(C_{5}(3+\tilde C_{4}T)B_{1}^{k}k!\right)^{2}.$$
    Then using \eqref{f} and \eqref{m}, it follows that 
    \begin{equation*}
    \begin{split}
        &\left\|\left(t\langle D\rangle^{2\tilde s}\right)^{k}u\right\|^{2}_{L^{2}}+\int_{0}^{t}\left\|\left(\tau\langle D\rangle^{2\tilde s}\right)^{k}u\right\|^{2}_{H^{s}_{\gamma/2}}d\tau+\int_{0}^{t}\left\|\left(\tau\langle D\rangle^{2\tilde s}\right)^{k}u\right\|^{2}_{2, \gamma/2+s}d\tau\\
        &\le2T(T^{k}A^{k+1}k!)^{2}+16\left(\tilde C_{6}+(C_{5})^{2}(3+\tilde C_{5}T)^{2}\right)(B_{1}^{k}k!)^{2},
    \end{split}
    \end{equation*}
    here $\tilde C_{5}=\max\{C_{4}, \tilde C_{4}\}$. Finally, taking
    $$B_{1}\ge\max\left\{B_{0}, TA+1, (\tilde C_{5}T)^{2}+1, 4\sqrt{\tilde C_{6}+(C_{5})^{2}(3+\tilde C_{5}T)^{2}+TA^{2}}+1, \right\},$$
    we get for any $0<t\le T$,
    \begin{equation*}
    \begin{split}
        &\left\|\left(t\langle D\rangle^{2\tilde s}\right)^{k}u\right\|^{2}_{L^{2}}+\int_{0}^{t}\left\|\left(\tau\langle D\rangle^{2\tilde s}\right)^{k}u\right\|^{2}_{H^{s}_{\gamma/2}}d\tau\\
        &\quad+\int_{0}^{t}\left\|\left(\tau\langle D\rangle^{2\tilde s}\right)^{k}u\right\|^{2}_{2,\gamma/2+s}d\tau\le\left(B_{1}^{k+1}k!\right)^{2}.
    \end{split}
    \end{equation*}
\end{proof}

Next, we construct the following estimate to prove Theorem \ref{thm2}.

\begin{prop}\label{prop 5.1}
    For $0<s<1$ and $\gamma+2s\ge0$. Let $u$ be the solution of Cauchy problem \eqref{1-2}, $u_0\in L^2(\mathbb R^3)$ and $e^{t\langle v\rangle^{\gamma/2+s}}f\in L^2(\mathbb R^3)$. Then there exists a constant $B_{2}>0$ such that for all $k\in\mathbb N$, $0<t\le T$,
    \begin{equation}\label{5-1}
    \begin{split}
        &\left\|\left(t\langle \cdot\rangle^{\gamma/2+s}\right)^{k}u\right\|^2_{(L^{\infty}]0,T];L^2)}+\int_0^T\left\|\left(t\langle \cdot\rangle^{\gamma/2+s}\right)^ku\right\|^2_{H^s_{\gamma/2}}dt\\
        &\qquad+\int_0^T\left\|\left(t\langle \cdot\rangle^{\gamma/2+s}\right)^ku\right\|^2_{2,\gamma/2+s}dt\le\left(B_{2}^{k+1}k!\right)^2.
    \end{split}
    \end{equation}
\end{prop}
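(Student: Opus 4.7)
I will prove Proposition \ref{prop 5.1} by induction on $k\in\mathbb N$, following the template of Proposition \ref{prop 4.1} but with the velocity weight $\langle v\rangle^{\gamma/2+s}$ in place of the Fourier multiplier $\langle D\rangle^{2\tilde s}$. The base case $k=0$ is Lemma \ref{lemma3.1}. For the inductive step, I apply $(t\langle v\rangle^{\gamma/2+s})^k$ to \eqref{1-2} and take the $L^2$ inner product with $(t\langle v\rangle^{\gamma/2+s})^ku$ to produce, via G{\aa}rding's inequality as in \eqref{dt},
\[
\tfrac{d}{dt}\bigl\|(t\langle v\rangle^{\gamma/2+s})^ku\bigr\|_{L^2}^2+\bigl\|(t\langle v\rangle^{\gamma/2+s})^ku\bigr\|_{H^s_{\gamma/2}}^2+\bigl\|(t\langle v\rangle^{\gamma/2+s})^ku\bigr\|_{2,\gamma/2+s}^2
\]
bounded by a source term, a $kt^{-1}$ time-derivative term, and commutators. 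A decisive simplification over Proposition \ref{prop 4.1} is that the multiplication operators $\langle v\rangle^\gamma$ and $\langle v\rangle^{2s}$ both commute with $\langle v\rangle^{(\gamma/2+s)k}$, so the only surviving commutator is
\[
\bigl[\langle v\rangle^\gamma(1-\Delta)^s,(t\langle v\rangle^{\gamma/2+s})^k\bigr]u=\langle v\rangle^\gamma\bigl[(1-\Delta)^s,(t\langle v\rangle^{\gamma/2+s})^k\bigr]u,
\]
which I then split as $\langle D\rangle^{-s}\langle v\rangle^{-\gamma/2}$ times $\langle D\rangle^s\langle v\rangle^{\gamma/2}(\cdots)$ exactly as was done for $I_3$ in Proposition \ref{prop 4.1}.

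The iterated commutator estimate is the heart of the argument. Writing $\langle v\rangle^{(\gamma/2+s)k}=\langle v\rangle^{\gamma/2+s}\langle v\rangle^{(\gamma/2+s)(k-1)}$ and applying the Leibniz rule for commutators, one obtains the recursion
\[
[(1-\Delta)^s,\langle v\rangle^{(\gamma/2+s)k}]=\langle v\rangle^{\gamma/2+s}[(1-\Delta)^s,\langle v\rangle^{(\gamma/2+s)(k-1)}]+[(1-\Delta)^s,\langle v\rangle^{\gamma/2+s}]\langle v\rangle^{(\gamma/2+s)(k-1)}.
\]
Iterating this and invoking Lemma \ref{lemma2.2} (via \eqref{0-s-1/2} when $0<s\le 1/2$ and via \eqref{1/2-s-1} followed by Lemma \ref{interpolation} or Lemma \ref{interpolation1} when $1/2<s<1$) on each single-power commutator, I obtain an estimate of the shape
\[
\bigl\|\langle D\rangle^{-s}\langle\cdot\rangle^{-\gamma/2}\bigl[(1-\Delta)^s,(t\langle v\rangle^{\gamma/2+s})^k\bigr]u\bigr\|_{L^2}\le C_5\sum_{j=0}^{k-1}(C_4t)^{k-j}\binom{k}{j}\bigl\|(t\langle v\rangle^{\gamma/2+s})^ju\bigr\|_{H^s_{\gamma/2}},
\]
with $C_4,C_5$ depending only on $\gamma,s$. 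Pairing this with $\langle D\rangle^s\langle v\rangle^{\gamma/2}(t\langle v\rangle^{\gamma/2+s})^ku$ via Cauchy--Schwarz, then integrating in time, applying Minkowski's inequality and the induction hypothesis (exactly as in the $I_2$, $I_3$ estimates of Proposition \ref{prop 4.1}), bounds the square of this quantity by $(\tilde{C}_5 B_2^k k!)^2$ for $T$ sufficiently controlled.

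The remaining two contributions are routine. For the time-derivative term, I use the Cauchy--Schwarz splitting
\[
\bigl\|(t\langle v\rangle^{\gamma/2+s})^ku\bigr\|_{L^2}^2\le\bigl\|(t\langle v\rangle^{\gamma/2+s})^{k-1}u\bigr\|_{L^2}\cdot t\bigl\|(t\langle v\rangle^{\gamma/2+s})^ku\bigr\|_{2,\gamma/2+s},
\]
so that $2kt^{-1}\|(t\langle v\rangle^{\gamma/2+s})^ku\|_{L^2}^2$ absorbs $\frac12\|(t\langle v\rangle^{\gamma/2+s})^ku\|_{2,\gamma/2+s}^2$ into the coercive weighted norm and leaves $2k^2\|(t\langle v\rangle^{\gamma/2+s})^{k-1}u\|_{L^2}^2$, handled by the induction hypothesis. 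For the source, the pointwise inequality $(t\langle v\rangle^{\gamma/2+s})^k/k!\le e^{t\langle v\rangle^{\gamma/2+s}}$ gives $\|(t\langle v\rangle^{\gamma/2+s})^kf\|_{L^2}\le k!\,\|e^{t\langle v\rangle^{\gamma/2+s}}f\|_{L^2}$, which feeds a $k!$ on the right-hand side. Choosing
\[
B_2\ge\max\bigl\{B_0,\,T\|e^{T\langle v\rangle^{\gamma/2+s}}f\|_{L^2}+1,\,(C_4T)^2+1,\,\kappa\bigr\}
\]
for a suitable $\kappa$ absorbing the commutator and time-derivative constants closes the induction and yields \eqref{5-1}.

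The main obstacle, as indicated above, is to control the constants in the iterated commutator $[(1-\Delta)^s,\langle v\rangle^{(\gamma/2+s)k}]$: a direct single application of Lemma \ref{lemma2.2} with weight exponent $(\gamma/2+s)k$ produces a constant $C_{1,k}$ that grows in $k$ and would destroy the factorial behaviour. The point is to peel off one factor of $\langle v\rangle^{\gamma/2+s}$ at a time so that each elementary commutator $[(1-\Delta)^s,\langle v\rangle^{\gamma/2+s}]$ carries a \emph{fixed} constant depending only on $\gamma,s$, and the $k$-dependence accumulates only through the binomial coefficients $\binom{k}{j}$ and the powers $(C_4 t)^{k-j}$; this is exactly the structure that, combined with Minkowski's integral inequality and the inductive bound on lower indices, yields the desired $(B_2^{k+1}k!)^2$ majorant.
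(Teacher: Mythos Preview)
Your proposal follows essentially the paper's proof: induction on $k$ from Lemma~\ref{lemma3.1}, G{\aa}rding for coercivity, an iterated-commutator expansion of the \eqref{4-4-k} type with $k$-independent elementary constants, interpolation via Lemmas~\ref{interpolation}--\ref{interpolation1}, and Minkowski to close. The paper's execution is slightly simpler in two places: (i) the single surviving commutator $R_2=\bigl([(1-\Delta)^s,(t\langle v\rangle^{\gamma/2+s})^k\langle v\rangle^{\gamma/2}]u,\ \langle v\rangle^{\gamma/2}(t\langle v\rangle^{\gamma/2+s})^ku\bigr)$ is paired directly in $L^2$ --- no $\langle D\rangle^{-s}$ insertion, which was specific to $I_3$ only because an extra factor $\langle D\rangle^{2s}$ sat there --- producing a sum of $H^{2s-1}_{\gamma/2}$ norms indexed $j=1,\dots,k$, whose $j=k$ term is then absorbed after interpolation; and (ii) the time-derivative contribution is handled by the exact identity $kt^{2k-1}\|\langle v\rangle^{(\gamma/2+s)k}u\|_{L^2}^2=kt\,\|(t\langle v\rangle^{\gamma/2+s})^{k-1}u\|_{2,\gamma/2+s}^2$, which feeds directly into the induction hypothesis without your Cauchy--Schwarz split. (Incidentally, in your displayed commutator bound the prefactor should read $\langle v\rangle^{+\gamma/2}$, not $\langle v\rangle^{-\gamma/2}$, once the outer $\langle v\rangle^\gamma$ is distributed.)
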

\begin{proof}
    From \eqref{1-2}, we have
    \begin{equation}\label{5-1-1}
    \begin{split}
         &\frac12\frac{d}{dt}\left\|\left(t\langle\cdot\rangle^{\gamma/2+s}\right)^{k}u\right\|^{2}_{L^{2}}+\left\|\left(t\langle\cdot\rangle^{\gamma/2+s}\right)^{k}u\right\|^{2}_{2,\gamma/2+s}\\
         &\quad+\left((1-\Delta)^{s}\left((t\langle v\rangle^{\gamma/2+s})^{k}\langle v\rangle^{\gamma/2}u\right), \langle v\rangle^{\gamma/2}\left(t\langle v\rangle^{\gamma/2+s}\right)^{k}u\right)\\
         &=kt^{2k-1}\left\|\langle\cdot\rangle^{(\gamma/2+s)k}u\right\|^{2}_{L^{2}}+\left(\left(t\langle v\rangle^{\gamma/2+s}\right)^{k}f, \left(t\langle v\rangle^{\gamma/2+s}\right)^{k}u\right)\\
         &\quad+\left(\left[(1-\Delta)^{s}, \left(t\langle v\rangle^{\gamma/2+s}\right)^{k}\langle v\rangle^{\gamma/2}\right]u, \langle v\rangle^{\gamma/2}\left(t\langle v\rangle^{\gamma/2+s}\right)^{k}u\right)\\
         &=kt^{2k-1}\left\|\langle\cdot\rangle^{(\gamma/2+s)k}u\right\|^{2}_{L^{2}}+R_{1}+R_{2}.
    \end{split}
    \end{equation}

    We show \eqref{5-1} holds by induction on the index $k$. For $k=0$, it is enough to take in \eqref{3-1}. Assume $k\ge1$ and \eqref{5-1} is true for $0\le m\le k-1$,
    \begin{equation}\label{m1}
    \begin{split}
        &\left\|\left(t\langle \cdot\rangle^{\gamma/2+s}\right)^{m}u\right\|^2_{(L^{\infty}]0,T];L^2)}+\int_0^T\left\|\left(t\langle \cdot\rangle^{\gamma/2+s}\right)^mu\right\|^2_{H^s_{\gamma/2}}dt\\
        &\qquad+\int_0^T\left\|\left(t\langle \cdot\rangle^{\gamma/2+s}\right)^mu\right\|^2_{2,\gamma/2+s}dt\le\left(B_{2}^{m+1}m!\right)^2.
    \end{split}
    \end{equation}

    Now, we prove that \eqref{5-1} holds true for $m=k$. By using the G${\rm\mathring{a}}$rding inequality, it follows that
    \begin{equation*}
    \begin{split}
        &\left((1-\Delta)^{s}\left((t\langle v\rangle^{\gamma/2+s})^{k}\langle v\rangle^{\gamma/2}u\right), \langle v\rangle^{\gamma/2}\left(t\langle v\rangle^{\gamma/2+s}\right)^{k}u\right)\\
        &\ge\frac12\left\|\left(t\langle\cdot\rangle^{\gamma/2+s}\right)^{k}u\right\|^{2}_{H^{s}_{\gamma/2}}-C_{0}\left\|\left(t\langle\cdot\rangle^{\gamma/2+s}\right)^{k}u\right\|^{2}_{2,\gamma/2},
    \end{split}
    \end{equation*}
    as the argument in the Lemma \ref{lemma3.1}, applying Lemma \ref{interpolation} if $\gamma>0$ and applying Lemma \ref{interpolation1} if $-2s<\gamma\le0$, respectively, we get
    $$C_{0}\left\|\left(t\langle\cdot\rangle^{\gamma/2+s}\right)^{k}u\right\|^{2}_{2,\gamma/2}\le\frac18\left\|\left(t\langle\cdot\rangle^{\gamma/2+s}\right)^{k}u\right\|^{2}_{H^{s}_{\gamma/2}}+\tilde C_{0}\left\|\left(t\langle\cdot\rangle^{\gamma/2+s}\right)^{k}u\right\|^{2}_{L^{2}}.$$
    For the term $R_{1}$, by using Cauchy-Schwarz inequality, one has
    $$|R_{1}|\le\frac12\left\|\left(t\langle\cdot\rangle^{\gamma/2+s}\right)^{k}f\right\|^{2}_{L^{2}}+\frac12\left\|\left(t\langle\cdot\rangle^{\gamma/2+s}\right)^{k}u\right\|^{2}_{L^{2}}.$$
    For $R_{2}$, from the Plancherel theorem, as the arguments in \eqref{4-4-k}, it follows that 
    $$\left\|\left[\langle D\rangle^{2s}, \left(t\langle \cdot\rangle^{\gamma/2+s}\right)^{k}\langle \cdot\rangle^{\gamma/2}\right]u\right\|_{L^{2}}\le\sum_{j=1}^{k}(C_{7}t)^{k-(j-1)}C_{k}^{j}\left\|\left(t\langle\cdot\rangle^{\gamma/2+s}\right)^{j}u\right\|_{H^{2s-1}_{\gamma/2}}.$$
    Hence, using Cauchy-Schwarz inequality, it follows that for $0<s\le1/2$
    $$|R_{2}|\le\sum_{j=1}^{k}(C_{7}t)^{k-(j-1)}C_{k}^{j}\left\|\left(t\langle\cdot\rangle^{\gamma/2+s}\right)^{j}u\right\|_{2, \gamma/2}\left\|\left(t\langle\cdot\rangle^{\gamma/2+s}\right)^{k}u\right\|_{2, \gamma/2},$$
    and for $1/2<s<1$, we have
     $$|R_{2}|\le\sum_{j=1}^{k}(C_{7}t)^{k-(j-1)}C_{k}^{j}\left\|\left(t\langle\cdot\rangle^{\gamma/2+s}\right)^{j}u\right\|_{H^{2s-1}_{\gamma/2}}\left\|\left(t\langle\cdot\rangle^{\gamma/2+s}\right)^{k}u\right\|^{2}_{H^{2s-1}_{\gamma/2}}.$$
     Then for both two cases, using Lemma \ref{interpolation} for $\gamma>0$ and using Lemma \ref{interpolation1} for $-2s<\gamma\le0$, we obtain that for all $0<s<1$
     \begin{equation*}
     \begin{split}
         |R_{2}|&\le\frac18\left\|\left(t\langle\cdot\rangle^{\gamma/2+s}\right)^{k}u\right\|^{2}_{H^{s}_{\gamma/2}}+\tilde C_{7}\left\|\left(t\langle\cdot\rangle^{\gamma/2+s}\right)^{k}u\right\|^{2}_{L^{2}}\\
         &\quad+\left(\sum_{j=1}^{k-1}(C_{7}t)^{k-(j-1)}C_{k}^{j}\left\|\left(t\langle\cdot\rangle^{\gamma/2+s}\right)^{j}u\right\|_{H^{s}_{\gamma/2}}\right)^{2},
     \end{split}
    \end{equation*}
     with $\tilde C_{7}$ depends on $\gamma$ and $s$.

     Plugging these results back into \eqref{5-1-1}, one gets
     \begin{equation*}
     \begin{split}
         &\frac{d}{dt}\left\|\left(t\langle\cdot\rangle^{\gamma/2+s}\right)^{k}u\right\|^{2}_{L^{2}}+\left\|\left(t\langle\cdot\rangle^{\gamma/2+s}\right)^{k}u\right\|^{2}_{2,\gamma/2+s}+\left\|\left(t\langle\cdot\rangle^{\gamma/2+s}\right)^{k}u\right\|^{2}_{H^{s}_{\gamma/2}}\\
         %&\quad+\left((-\Delta)^{s}((t\langle v\rangle^{\gamma/2+s})^{k}\langle v\rangle^{\gamma/2}u), \langle v\rangle^{\gamma/2}(t\langle v\rangle^{\gamma/2+s})^{k}u\right)\\
         %&\le 4kt^{2k-1}\left\|\langle\cdot\rangle^{(\gamma/2+s)k}u\right\|^{2}_{L^{2}}+\tilde C_{3}\left\|\left(t\langle\cdot\rangle^{\gamma/2+s}\right)^{k}u\right\|_{L^{2}}+2\left\|\left(t\langle\cdot\rangle^{\gamma/2+s}\right)^{k}f\right\|^{2}_{L^{2}}\\
         &\le4kt\left\|\left(t\langle\cdot\rangle^{(\gamma/2+s)}\right)^{k-1}u\right\|^{2}_{2, \gamma/2+s}+C_{8}t^{2}\left\|\left(t\langle\cdot\rangle^{(\gamma/2+s)}\right)^{k-1}u\right\|^{2}_{2, \gamma/2+s}\\
         &\quad+2\left\|\left(t\langle\cdot\rangle^{\gamma/2+s}\right)^{k}f\right\|^{2}_{L^{2}}+4\left(\sum_{j=1}^{k-1}(C_{7}t)^{k-(j-1)}C_{k}^{j}\left\|\left(t\langle\cdot\rangle^{\gamma/2+s}\right)^{j}u\right\|_{H^{s}_{\gamma/2}}\right)^{2},
    \end{split}
    \end{equation*}
    with $C_{8}=4(\tilde C_{0}+\tilde C_{7}+1)$. Integrating from 0 to $t$, using Minkowski inequality and \eqref{m1}, taking $B_{2}\ge(C_{7}T)^{3}+1$, we obtain for all $t\in[0, T]$
    \begin{equation*}
     \begin{split}
          &\left\|\left(t\langle\cdot\rangle^{\gamma/2+s}\right)^{k}u\right\|^{2}_{L^{2}}+\int_{0}^{t}\left\|\left(\tau\langle\cdot\rangle^{\gamma/2+s}\right)^{k}u\right\|^{2}_{H^{s}_{\gamma/2}}d\tau\\
          &\quad+\int_{0}^{t}\left\|\left(\tau\langle\cdot\rangle^{\gamma/2+s}\right)^{k}u\right\|^{2}_{2,\gamma/2+s}d\tau\\
          &\le\left(4kT+C_{8}T^{2}\right)\left(B_{2}^{k}(k-1)!\right)^{2}+4\left((4+(C_{7}T)^{2})B_{2}^{k}k!\right)^{2}\\
          &\quad+2\int_{0}^{t}\left\|\left(\tau\langle\cdot\rangle^{\gamma/2+s}\right)^{k}f\right\|^{2}_{L^{2}}d\tau.
     \end{split}
    \end{equation*}
    Since $e^{t\langle v\rangle^{\gamma/2+s}}f\in L^2(\mathbb R^3)$, by using Taylor Formula it follows that there exists a constant $\tilde A>0$ such that
    $$\left\|\left(t\langle \cdot\rangle^{\gamma/2+s}\right)^{k}f\right\|_{L^{2}}\le t^{k}\tilde A^{k+1}k!,$$
    and therefore taking
     $$B_{2}\ge\max\left\{2\sqrt{T+C_{8}T^{2}+(4+(C_{7}T)^{2})^{2}+T\tilde A^{2}}+1, (C_{7}T)^{3}+1, B_{0}, \tilde AT+1\right\},$$
     we get for all $0<t\le T$,
     \begin{equation*}
    \begin{split}
        &\left\|\left(t\langle \cdot\rangle^{\gamma/2+s}\right)^{k}u\right\|^2_{(L^{\infty}]0,T];L^2)}+\int_0^T\left\|\left(t\langle \cdot\rangle^{\gamma/2+s}\right)^ku\right\|^2_{H^s_{\gamma/2}}dt\\
        &\qquad+\int_0^T\left\|\left(t\langle \cdot\rangle^{\gamma/2+s}\right)^ku\right\|^2_{2,\gamma/2+s}dt\le\left(B_{2}^{k+1}k!\right)^2.
    \end{split}
    \end{equation*}
\end{proof}

%\section{Existence}

\bigskip
\noindent {\bf Acknowledgements.}
This work was supported by the NSFC (No.12031006) and the Fundamental
Research Funds for the Central Universities of China.

\end{document}